\definecolor{darkgreen}{rgb}{.25,.65,.25}
\renewcommand{\phi}{\varphi}
\setlist[itemize]{itemsep=-1mm}
\newtheorem{theorem}{Theorem}
\newtheorem{lemma}[theorem]{Lemma}
\newtheorem{lem}[theorem]{Lemma}
\newtheorem{prop}[theorem]{Proposition}
\newtheorem{proposition}[theorem]{Proposition}
\newtheorem{definition}[theorem]{Definition}
\newtheorem{remark}[theorem]{Remark}
\newcommand{\vertiii}[1]{{\left\vert\kern-0.25ex\left\vert\kern-0.25ex\left\vert #1 
    \right\vert\kern-0.25ex\right\vert\kern-0.25ex\right\vert}}
\newcommand{\e}{\epsilon}
\newcommand{\R}{\mathbb{R}}
\newcommand{\eps}{\varepsilon}
\def\epsilon{\varepsilon}
\def\tilde{\widetilde}
\renewcommand{\1}{\mathds{1}}
\newcommand{\be}{\begin{equation}}
\newcommand{\ee}{\end{equation}}
\DeclareMathOperator{\id}{Id}
\DeclareMathOperator{\sign}{sign}
\numberwithin{equation}{section}
\numberwithin{theorem}{section}
\Crefname{assumption}{Assumption}{Assumptions}
\Crefname{theorem}{Theorem}{Theorems}
\Crefname{lem}{Lemma}{Lemmas}
\Crefname{cor}{Corollary}{Corollaries}
\Crefname{prop}{Proposition}{Propositions}
\Crefname{theorem}{Theorem}{Theorems}
\Crefname{conjecture}{Conjecture}{Conjectures}
\begin{document}

\title{{\bf{Slow and fast minimal speed traveling waves of the FKPP equation with chemotaxis}}}

\author{Christopher Henderson\footnote{The University of Arizona, Department of Mathematics, Tucson, AZ, USA\newline \indent\indent\hspace{-0.03in}{\em E-mail address}: ckhenderson@math.arizona.edu}}

\date{}

\maketitle

\begin{abstract}
We examine a general model for the Fisher-KPP (FKPP) equation with nonlocal advection.  The main interpretation of this model is as describing a diffusing and logistically growing population that is also influenced by intraspecific attraction or repulsion.  For a particular choice of parameters, this specializes to the Keller-Segel-Fisher equation for chemotaxis.  Our interest is in the effect of chemotaxis on the speed of traveling waves.  We prove that there is a threshold such that, when interactions are weaker and more localized than this, chemotaxis, despite being non-trivial, does not influence the speed of traveling waves; that is, the minimal speed traveling wave has speed 2 as in the FKPP case.  On the other hand, when the interaction is repulsive, we show that the minimal traveling wave speed is arbitrarily large in a certain asymptotic regime in which the interaction strength and length scale tend to infinity.
\end{abstract}



\section{Introduction and main results}\label{sec:results}

\subsubsection*{The general setting and the main goal}

The model we consider is
\begin{equation}\label{e:the_equation}
		u_t + (v u)_x = u_{xx} + u(1-u), \qquad 
			\text{ in } (0,+\infty)\times \R,
\end{equation}
where, for $\chi\in \R$, $\sigma > 0$, and $K \in L^1(\R)\cap L^\infty(\R)$, we define
\begin{equation}\label{e:advection}
	v = \chi  \, K_\sigma*u
		\qquad \text{ and } \qquad 
	K_\sigma = \frac{1}{\sigma} K\left( \frac{\cdot}{\sigma}\right).
\end{equation}
The unknown, $u$, typically represents the population density of a species that is experiencing an intraspecific aggregation or dispersion effect such as chemotaxis.  For simplicity, we always refer to this nonlocal effect as chemotaxis, although the model could be used in other settings.  The parameters $\chi$ and $\sigma$ encode the strength and length-scale of the chemotaxis, respectively, while the sign of $\chi$ determines if the effect is attractive ($\chi>0$) or repulsive ($\chi < 0$).


Before making our assumptions precise, we discuss the main goal of this article.  The equation~\eqref{e:the_equation} and similar reaction-diffusion equations model the invasion of species into a new environment.  When $\chi \equiv 0$, it is well-known that this invasion occurs with speed two.  More precisely, the minimal speed traveling wave has speed two (we define traveling waves below).  We aim to understand the effect of chemotaxis on this invasion speed; that is, we aim to answer the question: does the localized, nonlinear chemotaxis term alter speed of the minimal speed traveling wave, and, if it does, when and how is the speed changed?  While some results exist, they are either partial or require very specific structure of $K$.  We discuss this at the conclusion of this section.

\subsubsection*{Assumptions on $K$, $\chi$, and $\sigma$}

Throughout the paper, $K$ is a bounded odd function that is increasing except at $x = 0$; that is, it is increasing in $(-\infty,0)$ and in $(0,+\infty)$.  We also assume that $K$ does not change sign in $(-\infty,0)$ and in $(0,+\infty)$.  Without loss of generality, we assume that
\begin{equation}\label{e:K_normalization}
	\|K \|_{L^1(\R)} =1
		\quad \text{ and } \quad
	2 |K(0^+)|
		= - 2K(0^+)
		= 1,
\end{equation}
where $K(0^+) = \lim_{x\searrow 0} K(x)$.  Defining $K(0^-)$ analogously, notice that $2|K(0^+)| = |K(0^+) - K(0^-)|$; that is, $2|K(0^+)|$ is the magnitude of the ``jump'' of $K$ across the origin.

Finally, we assume that
\begin{equation}\label{e:K_antiderivative}
	K = \overline K'
		\qquad\text{ for some } \overline K
		\text{ such that }
		(1+|x|) \overline K \in L^1(\R).
\end{equation}
We make this assumption, but note that this is not necessary for all results below.

\begin{remark}\label{r:K}
	We give some examples of admissible choices of $K$ here.
	\begin{enumerate}[label=(\roman*)]
		\item $K(x) = -\frac{\sign(x)}{2} e^{- |x|}$.  This is the kernel related to the Keller-Segel model for chemotaxis; that is, $v = (\chi/ \sqrt \sigma) \bar v_x$ for $\bar v$ satisfying $- \sigma^2 \bar v_{xx} + \bar v = u$.  This has been studied extensively, see the discussion below.
		\item $K(x) = -\frac{\sign(x)}{2} \1_{[-1,1]}(x)$.
		\item $K(x) = -\frac{\sign(x)}{2} \left(1 + \frac{|x|}{k-1}\right)^{-k}$ for $k > 2$.
		\item $K(x) = - \frac{\sign(x)}{2} e^{- \frac{|x|^\alpha}{D_\alpha}}$ for any $\alpha \in (0,1)$, where $D_\alpha = \left(\int_0^\infty e^{-|x|^\alpha}\, dx\right)^{-\alpha}$.
	\end{enumerate}
\end{remark}

The choice of $K$ reflects modeling assumptions on intraspecific interactions.  The specific choice in \Cref{r:K}.(i) is due to an assumption that the chemical signal secreted by each individual diffuses (quickly) according the heat equation.  More broadly,~\eqref{e:the_equation} can be considered as a general model in which individuals, depending on the sign of $\chi$, either repel or attract one another.  One example of this phenomenon is herding, although many examples exist.  To further illustrate this, we discuss the choice \Cref{r:K}.(ii).  This corresponds to individuals that interact only when they are distance $\sigma$ or closer and could be due to, e.g., limited vision.  Roughly, when $\chi < 0$, they ``feel a pull'' towards one another with strength $O(|\chi|)$ with a multiplicative factor related to the fraction of the population within distance $\sigma$.  When $\chi>0$, they experience a  ``push'' instead. 
In general, aggregation-diffusion models arise both as gradient flows of a free energy involving an interaction kernel and as limits of microscopic models; see, e.g., the results, discussions, and references in~\cite{CDFLS,JabinWang}.

Our interest in this article is, roughy, in the cases $|\chi|, |\chi|/\sigma \ll 1$ and $-\chi \gg 1$.  As such, we make the standing assumption that
\begin{equation}\label{e:chi_sigma}
	\chi, \frac{\chi}{\sigma} < \frac{1}{2}.
\end{equation}
This simplifies some estimates but does not play a fundamental role in our analysis.

\subsubsection*{Traveling waves and preliminary observations}

Our main objects of study are the traveling wave solutions of~\eqref{e:the_equation}, which we define below. 
In particular, we are interested in the effect of the chemotaxis term $v$ on the speed of the traveling wave.
\begin{definition}\label{def:wave}
A traveling wave solution is a pair $(c,u)$ such that (i) $0 < u \in C^2(\R)\cap L^\infty(\R)$, (ii) $c\geq 0$, (iii) $\tilde u(t,x) := u(x-ct)$ solves~\eqref{e:the_equation}, and (iv)
\begin{equation}\label{e:wave_limits}
	\lim_{x\to-\infty} u(x) = 1,
		\quad \text{ and } \quad
		\lim_{x\to\infty} u(x) = 0.
\end{equation}
We refer to $c$ as the speed and $u$ as the profile.  We say that a traveling wave $(c^*, u^*)$ is a  minimal speed traveling wave if $c^* \leq c$ for all traveling waves $(c,u)$.
\end{definition}

For similar reaction-diffusion equations, one sometimes assumes that $u(x)$ is simply uniformly positive as $x\to-\infty$.  In fact, the convergence to one of $u$ at $-\infty$ plays almost no role in our analysis; however, it simplifies the statements of some intermediate lemmas to use the definition above.

For any traveling wave $u$, notice that
\begin{equation}\label{e:wave}
		- c u_x + (v u)_x = u_{xx} + u(1-u).
\end{equation}
This equation will be the basis for our analysis.

Using standard techniques, we can show that traveling waves exist.
\begin{proposition}\label{thm:existence_and_speed_lower}
Under the assumptions above, there exists a wave with speed
\[
	c 
		\leq 2 \sqrt{ 1 + \frac{|\chi|}{\sigma}} + \frac{|\chi|}{2}
		\leq 2 + \frac{|\chi|}{\sigma} + \frac{|\chi|}{2}.
\]
Moreover, if $(c,u)$ is any traveling wave solution of~\eqref{e:wave}, then $c\geq 2$.
\end{proposition}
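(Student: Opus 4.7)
The two claims are independent and I would handle them separately. The lower bound $c\geq 2$ follows from a local analysis at $+\infty$, where the nonlocal chemotactic term vanishes. Existence together with the upper bound on $c$ comes from approximating on bounded intervals $[-L,L]$ and sending $L\to\infty$, using uniform a priori $L^\infty$ bounds on $v$ and $v_x$. Both parts rest on the integration-by-parts identities
\[
v(x) = \chi \int \overline K_\sigma(x-y)\, u'(y)\, dy, \qquad v_x(x) = \chi \int K_\sigma(x-y)\, u'(y)\, dy,
\]
where $\overline K_\sigma(x) := \overline K(x/\sigma)$, valid for any monotone profile $u:\R\to[0,1]$ with limits as in \eqref{e:wave_limits}. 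The normalizations \eqref{e:K_normalization} together with $K$ being odd and sign-monotone imply $\overline K\geq 0$ with $\overline K(0)=1/2$; combined with $\int|u'|=1$, Young's inequality yields $|v|\leq |\chi|/2$ and $|v_x|\leq |\chi|/\sigma$.

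\textbf{Lower bound $c\geq 2$.} The identities above together with $u(+\infty)=0$ and the integrability of $\overline K$ from \eqref{e:K_antiderivative} give $v(x),v_x(x)\to 0$ as $x\to+\infty$. Rewriting \eqref{e:wave} as
\[
u'' + (c-v)u' + (1-v_x-u)\, u = 0,
\]
the coefficients on $(R,\infty)$ are arbitrarily close (for $R$ large) to those of the FKPP linearization $w''+cw'+w=0$. For $c<2$ the reference ODE has oscillating solutions whose zeros lie in bounded intervals, so a Sturm comparison argument forces $u$ to change sign on $(R,\infty)$, contradicting $u>0$.

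\textbf{Existence and upper bound.} I would build approximations on $[-L,L]$ by solving
\[
u_L'' + c_L u_L' - (v_L u_L)' + u_L(1-u_L) = 0, \qquad u_L(-L) = 1,\ u_L(L) = 0,
\]
with an auxiliary normalization such as $u_L(0)=1/2$ fixing the speed $c_L$; existence for each $L$ is a standard Leray--Schauder degree argument, treating $v_L$ as a coefficient determined by $u_L$ (with $u_L$ extended by $1$ and $0$ off $[-L,L]$). The a priori bounds reduce the equation to an essentially local FKPP with drift bounded in absolute value by $|\chi|/2$ and linear reaction rate bounded by $1+|\chi|/\sigma$. The standard sub/super-solution construction using the exponential $e^{-\lambda x}$, whose exponent must satisfy $\lambda^2 - (c_L - |\chi|/2)\lambda + (1+|\chi|/\sigma) = 0$, then forces $c_L \leq |\chi|/2 + 2\sqrt{1+|\chi|/\sigma}$ — real roots exist precisely under this condition. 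Elliptic compactness and the normalization at $0$ allow passing $L\to\infty$ to produce a non-trivial wave with the claimed speed. The principal obstacle throughout is that the nonlocal convolution destroys the standard comparison principle, so one cannot directly plug an ansatz for $u$ into the full PDE; the remedy, in both halves of the proof, is the a priori decoupling that lets $v$ and $v_x$ be treated as bounded coefficients, after which the classical FKPP toolkit (KPP linearization, Sturm comparison, degree theory) applies. A second, subtler point is ensuring that in the limit $L\to\infty$ the profile indeed attains $u(-\infty)=1$; this uses the normalization $u_L(0)=1/2$ combined with the FKPP reaction to rule out a degenerate limit.
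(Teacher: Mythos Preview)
Your overall strategy matches the paper's almost exactly: slab approximations with a Leray--Schauder fixed point, an exponential supersolution $e^{-\lambda x}$ for the upper bound on $c$ (with $\lambda=(c-|\chi|/2)/2$), an oscillatory barrier for the lower bound $c\geq 2$, and a limit $L\to\infty$ with the $u(-\infty)=1$ issue handled separately.

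There is, however, a real gap in your derivation of the a priori bounds $|v|\leq|\chi|/2$ and $|v_x|\leq|\chi|/\sigma$. You obtain these from the integration-by-parts identities $v=\chi\,\overline K_\sigma*u'$, $v_x=\chi\,K_\sigma*u'$ together with $\int|u'|=1$, and you explicitly restrict to ``any monotone profile $u$.'' But neither the traveling wave nor the slab approximant is known to be monotone; the paper only proves monotonicity where $u$ is small. Without monotonicity $\int|u'|$ need not equal $1$ (or even be finite), so your route to the bounds breaks down. The bounds themselves are correct and are what the paper uses, but the paper derives them differently: writing $v(x)=\chi\int_0^\infty K_\sigma(y)\,[u(x-y)-u(x+y)]\,dy$ and using only that $K$ is odd with fixed sign on $(0,\infty)$ and that $0\leq u\leq\|u\|_{L^\infty}$, so that $|u(x-y)-u(x+y)|\leq\|u\|_{L^\infty}$. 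This gives the factor $1/2$ from $\|K_\sigma\|_{L^1(0,\infty)}=1/2$ with no monotonicity assumption. The same issue affects your claim that $v,v_x\to 0$ at $+\infty$: it is true, but follows from dominated convergence in the representation above, not from your $\overline K_\sigma*u'$ formula. Once you replace your justification of the $v$-bounds with this cancellation argument, the rest of your outline goes through and coincides with the paper's proof.
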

We note that the lower bound on the traveling wave speed, that is, that traveling waves cannot have a {\em slower} speed than $2$, follows directly from previous results such as \cite[Theorem 1.1]{HamelHenderson}, but we provide a slightly different proof here.

\subsubsection*{Slow traveling waves}

The first main result of the article is that, under a smallness condition on $\chi\sigma^2$ and $\chi/\sigma$, the speed of the minimal speed traveling wave is exactly $2$, which is the same as in the case where chemotaxis is not present ($\chi=0$).
\begin{theorem}\label{thm:slow}
Under the assumptions above, there exists $\e_0>0$ such that, if $|\chi|(\sigma^{-1} +  \sigma^2) < \epsilon_0$, then there exists a traveling wave solution $(c,u)$ of~\eqref{e:the_equation} such that $c = 2$.
\end{theorem}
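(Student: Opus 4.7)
Since \Cref{thm:existence_and_speed_lower} already forces $c\ge 2$ for any traveling wave, it suffices to construct a single profile satisfying \eqref{e:wave} at the fixed speed $c=2$. The plan is to build ordered sub- and super-solutions of
\[
u_{xx}+2u_x-(v[u]\,u)_x+u(1-u)=0,\qquad v[u]:=\chi K_\sigma\ast u,
\]
at this speed and then run the same Leray--Schauder/truncation-and-limit scheme underlying \Cref{thm:existence_and_speed_lower}, now fixed at $c=2$ rather than the larger speed used there. The boundary conditions of \Cref{def:wave} will then follow from the sandwich $\underline u\le u\le \bar u$ via standard trichotomy considerations during the $L\to\infty$ passage.

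For the supersolution I would start from the classical FKPP profile
\[
\bar u_0(x)=\begin{cases}1,&x\le 0,\\(1+x)e^{-x},&x>0,\end{cases}
\]
which is $C^1$ across $x=0$ and satisfies $(\bar u_0)_{xx}+2(\bar u_0)_x+\bar u_0(1-\bar u_0)=-\bar u_0^2$ on the exponential branch. The main task is to show that the chemotactic contribution $(v[\bar u_0]\,\bar u_0)_x$ can be absorbed into this margin of $-\bar u_0^2$, possibly after a small modification of $\bar u_0$ such as rescaling the argument by a factor $1+O(|\chi|)$. The key estimates are threefold. First, since $K$ is odd one has $\int K_\sigma=0$, hence
\[
v(x)=\chi\int K_\sigma(z)\bigl[\bar u(x-z)-\bar u(x)\bigr]\,dz,
\]
which converts the convolution into a ``difference'' that automatically picks up an extra power of $\bar u$ on the exponential tail. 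Second, separating the jump of $K$ at the origin produces a pointwise term of the form $v_x\sim -(\chi/\sigma)\bar u+\chi(K'_{\mathrm{reg}})_\sigma\ast \bar u$, contributing a factor $1/\sigma$. Third, the weighted assumption $(1+|x|)\overline K\in L^1$ combined with the integration-by-parts identity $v=\chi\overline K_\sigma\ast \bar u_x$ (where $\overline K_\sigma(x)=\overline K(x/\sigma)$) controls the long-range part of $v$ and contributes the factor $\sigma^2$ after rescaling the weighted norm. Combining these should give $|(v[\bar u]\bar u)_x|\le C|\chi|(\sigma^{-1}+\sigma^2)\bar u^2$, which is absorbed into $-\bar u^2$ once $\epsilon_0$ is small; the corner at $x=0$ is handled in the viscosity sense using $\bar u_{xx}(0^+)=-1<0=\bar u_{xx}(0^-)$.

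For the subsolution I would take an analogous truncated exponential, e.g.\ $\underline u(x)=\max\{0,\varepsilon(e^{-x}-e^{-\mu x})\}$ with $\mu\in(1,2)$ close to $1$ and $\varepsilon>0$ small, and verify the subsolution inequality by the same perturbative method. The main obstacle throughout is precisely this chemotactic estimate: a naive bound $\|v\|_{L^\infty}\le |\chi|$ gives only $|v\bar u_x|\lesssim |\chi|\bar u$, which dominates $-\bar u^2$ in the tail $x\to +\infty$ and destroys the supersolution property. Only by fully exploiting the cancellation $\int K_\sigma=0$ together with the regularity assumption $(1+|x|)\overline K\in L^1$ can one extract the extra factor of $\bar u$ that is needed; and it is precisely the balance between the short-range singular contribution from the jump of $K$ at $0$ (of size $|\chi|/\sigma$) and the long-range contribution from the weighted tail of $\overline K$ (of size $|\chi|\sigma^2$ after rescaling) that accounts for the asymmetric smallness condition $|\chi|(\sigma^{-1}+\sigma^2)<\epsilon_0$ appearing in the statement.
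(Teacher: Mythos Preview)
Your approach has a genuine gap at the central estimate: the pointwise bound $|(v[\bar u]\bar u)_x|\le C|\chi|(\sigma^{-1}+\sigma^2)\,\bar u^2$ cannot hold for the general kernels covered by the assumptions, in particular for the polynomially decaying kernels (iii)--(iv) of \Cref{r:K}. Using the oddness of $K$, for $x\gg\sigma$ one has
\[
v(x)=\chi\int_0^\infty K_\sigma(z)\bigl[\bar u(x-z)-\bar u(x+z)\bigr]\,dz,
\]
and the far-field contribution $z>x$, where $\bar u(x-z)=1$ and $\bar u(x+z)\approx 0$, already gives $|v(x)|\gtrsim|\chi|\,\overline K(x/\sigma)$. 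If $\overline K$ decays only polynomially, this is polynomial in $x$, whereas $\bar u(x)\sim xe^{-x}$; hence $|v|/\bar u\to\infty$ and (since $\bar u_x\sim-\bar u$) the term $v\bar u_x$ dominates the margin $-\bar u^2$ by an exponentially large factor, no matter how small $|\chi|(\sigma^{-1}+\sigma^2)$ is. The identity $v=\chi\overline K(\cdot/\sigma)\ast\bar u_x$ and the weighted assumption $(1+|x|)\overline K\in L^1$ furnish only \emph{integral} bounds such as $\int_0^\infty x|v|\,dx\lesssim|\chi|\sigma^2$, not the pointwise decay you need; and the near/far contributions carry the same sign, so there is no further cancellation to exploit. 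This is precisely the obstruction the paper identifies when discussing the Salako--Shen--Xue argument, which succeeds only because in the Keller--Segel case $\overline K$ is itself exponential.

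The paper circumvents this by abandoning pointwise barriers altogether. On the slab $[-a,a]$ it assumes $c=2+\epsilon$ with $\epsilon>0$, removes an integrating factor to reduce~\eqref{e:wave} to the self-adjoint problem $-w_{xx}-Vw=0$ with $V=-u-\epsilon(1+\epsilon/4)+\tilde v(c/2-\tilde v/4)+\tilde v_x/2$, and studies the periodic principal eigenpair $(-\partial_{xx}-V)\phi=\lambda\phi$. The key input is a Poincar\'e-type inequality (\Cref{l:poincare}) of the form
\[
\int \tilde v_x f^2,\ \int |\tilde v| f^2 \ \le\ \frac{C_K|\chi|(1+\sigma^2)}{\theta}\Bigl(\int u f^2+\int f_x^2\Bigr),
\]
whose proof is exactly where the integral bound $\int x|\tilde v|\lesssim|\chi|\sigma^2$ (and hence the condition $(1+|x|)\overline K\in L^1$) enters. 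This forces $\lambda\ge0$, so $\phi$ is a supersolution for $w$; a separate growth estimate on $\phi$ then contradicts $w(0)=\theta$ for large $a$. A secondary issue with your plan is that~\eqref{e:the_equation} lacks a comparison principle because $v=v[u]$ depends nonlocally on the unknown; even with correct barriers for $v[\bar u]$ and $v[\underline u]$, sandwiching a solution at $c=2$ via Leray--Schauder requires additional structure you have not supplied.
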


 In view of the discussion of \Cref{thm:existence_and_speed_lower}, the thrust of \Cref{thm:slow} is to establish the upper bound on $c$.  The standard method of establishing an upper bound on the speed is to build supersolutions of $u$.  There are two main obstructions to this, both stemming from the fact that $v$ depends nonlocally on $u$. First,~\eqref{e:wave} does not enjoy the maximum principle.  Second, the standard arguments use strongly imposed structure on the advection, such as periodicity, which $v$ does not have.

In some cases, such difficulties can be sidestepped by ad hoc methods; however, to our knowledge, such proofs use strongly the structure of the advection~\cite{SalakoShenXue,ConstantinKiselevRyzhik}.  These are discussed below in the discussion of the history of this problem, and, in that discussion, we describe why their methods cannot generalize to our setting.  

We now describe the main idea of the proof of \Cref{thm:slow}.  We work on the level of the ``slab problem'' on $[-a,a]$ for $a\gg1$, as is typical in the construction of traveling waves.  As described above, we need only establish an upper bound.  To this end, we treat $v$ as a given function and construct a supersolution to the linear equation
\[
	- c \overline u_x
		+ (v \overline u)_x = \overline u_{xx} + \overline u(1-u).
\]
By removing an integrating factor involving $v$, we reduce this problem to studying the principal eigenfunction and eigenvalue of
\be\label{e.c6261}
	- \phi^a_{xx} - V \phi^a = \lambda \phi^a,
		\qquad\text{where }
		V = - u - (c-2)\left(1 + \frac{c-2}{4}\right) + v\left(\frac{2c - v}{4}\right) + \frac{v_x}{2},
\ee
on $[-a,a]$ with periodic boundary conditions.  A significant advantage of~\eqref{e.c6261} is that it is self-adjoint and, thus, one may analyze $\lambda$ through the Rayleigh quotient.  There are two main steps in the analysis.

First, we establish the nonnegativity of $\lambda$.  Heuristically, this occurs because, due to the $u$ term, $V$ is negative everywhere except far to the right, where it can take, at most, a small positive value (and that can only occur if $c>2$); hence, a potential eigenfunction trying to ``minimize'' its eigenvalue, must be large only on the far right.  On the other hand, it must be periodic, meaning that it will be large on the far left where $V$ is very negative.  We are unable to make this intuition rigorous.  Instead, in order to establish this rigorously, we develop a pair of functional inequalities~\eqref{e:poincare1}-\eqref{e:poincare2} allowing us to show that, if $c > 2$,
\[
	\int (\phi_x^a)^2
		- \int V (\phi^a)^2 dx
		\geq 0,
\]
when $a$ is sufficiently large.  This, after multiplying~\eqref{e.c6261} by $\phi^a$ and integrating by parts, yields the nonnegativity of $\lambda$.

This brings us to the second step.  The fact that $\lambda\geq 0$ allows us to construct a supersolution of~\eqref{e:wave} via $\phi^a$.  This supersolution is only useful if we have good pointwise bounds on $\phi^a$ that hold for all $a$.  We control the size of $\phi^a$ by constructing barriers and using the regularity imparted by~\eqref{e.c6261}.  We use this to show that, if $c>2$, $u(0)$ must be exponentially small in the parameter $a$, which contradicts the fact that we have constructed $u$ to take a fixed positive value at $x=0$.

\Cref{thm:slow} is proved in \Cref{s:slow_tw}.

\subsubsection*{Fast traveling waves}

In view of \Cref{thm:existence_and_speed_lower} and \Cref{thm:slow}, it is natural to wonder if chemotaxis can {\em speed up} traveling waves.  Our second main result establishes this.

\begin{theorem}\label{thm:fast}
Fix any traveling wave solution $(c,u)$ of~\eqref{e:the_equation} in the sense of \Cref{def:wave}.  For any $\eps \in (0,1)$, there exists $A_\eps$ such that
\[
	c \geq (1 - \eps) \frac{|\chi|}{2}
\]
whenever
\[
	\sigma > A_\eps
		\quad\text{ and }\quad
	\frac{\sigma}{-\chi} > A_\eps.
\]
The constant $A_\eps$ depends only on $\eps$ and $K$.
\end{theorem}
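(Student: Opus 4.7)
The plan is to combine an integrated form of the traveling wave equation with a direct lower bound on the chemotactic velocity $v$, exploiting that in the regime $\sigma \gg 1$, $\sigma \gg |\chi|$ the convolution $v = \chi K_\sigma * u$ is close to $|\chi|/2$ across the bulk of the wave, which in turn forces the speed $c$ to be at least this large.

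First, I would integrate the wave equation from $x$ to $+\infty$, using $u(+\infty) = u_x(+\infty) = 0$ and boundedness of $v$, to obtain
\[
(c - v(x))\, u(x) + u_x(x) = F(x), \qquad F(x) := \int_{x}^{+\infty} u(y)(1-u(y))\,dy \geq 0.
\]
Integrating once more over $\R$ gives $c = \int u(1-u)\,dx$ and the width bound $|\{x : u(x) \in (\delta, 1-\delta)\}| \leq c/[\delta(1-\delta)]$ for every $\delta \in (0, 1/2)$. Set $x^* := \sup\{x : u(x) \geq 1/2\}$, which is finite by the limits of $u$; then $u(x^*) = 1/2$ by continuity and $u_x(x^*) \leq 0$ by the choice of $x^*$. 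The identity at $x^*$ thus gives $c \geq v(x^*)$, and the theorem reduces to showing $v(x^*) \geq (1-\epsilon)|\chi|/2$.

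For this, oddness of $K$ (combined with $K < 0$ on $(0,\infty)$) and the substitution $w = (x^* - y)/\sigma$ give
\[
v(x^*) = |\chi| \int_0^{+\infty} |K(w)| \big[u(x^* - \sigma w) - u(x^* + \sigma w)\big]\,dw,
\]
to be compared with $|\chi|/2 = |\chi|\int_0^\infty |K(w)|\,dw$. Arguing by contradiction, assume $c < (1-\epsilon)|\chi|/2$: the width bound then yields $|\{u \in (\delta, 1-\delta)\}| < (1-\epsilon)|\chi|/[2\delta(1-\delta)]$. Fix $\delta$ of order $\epsilon$ and choose $w_0$ small (depending only on $\epsilon$ and $K$) so that $\int_0^{w_0}|K|\,dw$ is as small as desired; then choose $A_\epsilon$ large enough that the hypotheses $\sigma, \sigma/|\chi| > A_\epsilon$ force $\sigma w_0$ to exceed the width above. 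Assuming monotonicity of $u$, this gives $u(x^* - \sigma w) \geq 1-\delta$ and $u(x^* + \sigma w) \leq \delta$ for every $w \geq w_0$; splitting the $w$-integral at $w_0$ and bounding the near-diagonal portion trivially yields $v(x^*) \geq (1-\epsilon/2)|\chi|/2$, which combined with $c \geq v(x^*)$ contradicts $c < (1-\epsilon)|\chi|/2$.

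The main obstacle is the monotonicity used in the last step: without it, $u$ could oscillate to the left of $x^*$ and take values strictly less than $1-\delta$ even at distances much larger than the width of the transition set. To handle general waves in the sense of \Cref{def:wave}, one would need either to establish monotonicity separately (for instance by a sliding argument adapted to the nonlocal setting, at least in the large-$\sigma$ regime), or to bypass pointwise control of $u(x^* \pm \sigma w)$ by bounding non-monotone contributions via additional integral identities obtained from multiplying~\eqref{e:wave} by test functions such as $u_x$, $u(1-u)$, or $\bar K_\sigma$. I expect this to be the technical heart of the argument.
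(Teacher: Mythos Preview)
Your overall strategy is very close to the paper's, and in one respect cleaner. The paper also argues by contradiction from $c<(1-\eps)|\chi|/2$, derives the identity $c=\int u(1-u)\,dx$, and splits into a ``wide front'' case (handled exactly as you do, via the width bound) and a ``narrow front'' case. In the narrow case the paper shows $v\geq c$ on an interval of length $R\sigma$ to the right of the front and then builds a sine subsolution there to force a contradiction via the maximum principle. Your integrated identity $(c-v)u+u_x=F\geq 0$ evaluated at the rightmost point $x^*$ with $u(x^*)=1/2$, giving $c\geq v(x^*)$ in one line, replaces that subsolution argument entirely; this is a genuine simplification.

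The monotonicity gap you flag is real but is resolved by an elementary preliminary lemma, not by the heavier tools you list. The key point is that one does not need global monotonicity, only monotonicity once $u$ has dropped below a level close to $1$. Concretely: if $\chi\leq 0$ and $u(x_0)<(1+|\chi|/(2\sigma))^{-1}$, then $u_x(x_0)\leq 0$. This is a direct maximum-principle argument on~\eqref{e:wave}: at a would-be local minimum $y_0$ below this threshold one uses~\eqref{e:v_x} together with the sign of $(K_\sigma)_x$ to check $1-u-v_x>0$ there, contradicting $u_{xx}-(v-c)u_x=-u(1-u-v_x)<0$. Once you have this, a continuity argument gives that $u$ is decreasing on all of $[\bar x,\infty)$, where $\bar x$ is the last crossing of the threshold level, and that $u\geq$ threshold on $(-\infty,\bar x]$. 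Since in your regime $|\chi|/\sigma$ is small, the threshold $(1+|\chi|/(2\sigma))^{-1}$ is as close to $1$ as you like; taking $1-\delta$ below it makes the transition set $\{u\in(\delta,1-\delta)\}$ a genuine interval of length at most $c/[\delta(1-\delta)]$, and your splitting of the $w$-integral at $w_0$ goes through verbatim (the near-diagonal contribution is nonnegative because $u\geq 1/2$ on $(-\infty,x^*]$ and $u\leq 1/2$ on $[x^*,\infty)$, again from the same monotonicity). So no sliding or additional integral identities are needed; the missing piece is just this short lemma.
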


\Cref{thm:fast} is essentially sharp when $1 \ll -\chi \ll \sigma$.  Indeed, the lower bound in \Cref{thm:fast} and the upper bound in \Cref{thm:existence_and_speed_lower} match up to $o(\chi)$.  Roughly, this shows that the minimal speed traveling wave has speed $|\chi|/2 + o(\chi)$.
%

We now discuss the idea of the proof.  Roughly, the wider the front\footnote{Here, we loosely think of the front as the region in which $u\not\approx 1,0$.} is in a reaction-diffusion problem, the faster the wave should propagate.  Intuitively, this is due to a wider region in which the ``reaction'' (in this case, the population reproducing) occurs.  Mathematically, one can see this by integrating~\eqref{e:wave} to obtain
\[
	c = \int_{-\infty}^\infty u(1-u)\, dx,
\]
and noticing that $u(1-u)$ is $\approx 0$ away from the front.  On the other hand, if the front is narrow, one can show that $v\approx |\chi|/2$ everywhere ``near'' the front. 
To see this, notice that, as $\sigma \gg 1$, roughly half the convolution occurs where $u \approx 1$ while the other half the convolution occurs where $u \approx 0$. In this case, the wave is ``pushed'' by $v$ at the speed $\approx |\chi|/2$.  The proof proceeds by quantifying this intuition in a careful way.

In contrast to the usual FKPP equation, in which fronts are pulled (that is, the speed is determined by the behavior far to the right where $u\approx 0$), the intuition above shows that fronts are pushed when $-\chi >2$ and $\sigma$ is sufficiently large.  Thus, the combination of \Cref{thm:slow,thm:fast} shows a transition from pulled to pushed.  To our knowledge the only similar setting in which this is observed is the Burgers-FKPP equation considered in~\cite{BramburgerHenderson}, discussed below.  In both cases, a ``threshold'' exists for certain parameters under which the advection has no effect on the speed (although the front profile is changed) and over which speed-up occurs.

\subsubsection*{History of the problem, related work, and further discussion of main difficulties}

The general model considered here was introduced by Hamel and the present author in~\cite{HamelHenderson}.  There, we considered instead the Cauchy problem, where $u$ solves~\eqref{e:the_equation} with initial data $u_0$ that is compactly supported (at least on the right), and showed that spreading speed is at least $2$.  This lower bound, which applies quite generally to a collection of models, immediately yields a lower bound for the speed of traveling waves (see \Cref{thm:existence_and_speed_lower}).  Additionally, we proved lower bounds on {\em superlinear} propagation in the case when $\chi < 0$ and $K \notin L^1$.

The majority of work to this point has been on the Keller-Segel model (see \Cref{r:K}.(i)).  While there is a large body of work regarding well-posedness on finite domains (see, e.g.,~\cite{TelloWinkler} and the literature citing it) and on propagation for the Cauchy problem, we focus our discussion here to those works investigating traveling waves and other propagation phenomena.  To our knowledge, the first work in this direction was due to Nadin, Perthame, and Ryzhik~\cite{NadinPerthameRyzhik}, who showed constructed traveling waves with (unknown) speed between $2$ and $2 + \chi/(1 - \chi/\sigma)$, in our notation, under the assumption that $0 \leq \chi < \sqrt \sigma \min\{1, \sqrt\sigma\}$.  This was later upgraded to a sharp result by Salako, Shen, and Xue~\cite{SalakoShenXue} under the additional assumptions\footnote{The model considered by Salako, Shen, and Xue has  parameters $\chi$, $a$, $b$, $\lambda$, and $\mu$; however, these can be reduced to the two parameters $\chi$ and $\sigma$ used above by scaling.} that $\sigma < 1$ and
\[
	0
		\leq \left( 1 + \frac{\sigma^{1/4} - 1}{2(\sigma^{1/4} + 1)}\right) \frac{\chi}{\sigma}
		\leq 1.
\]
Their proof is based on the ad hoc construction of sub- and supersolutions and uses strongly the structure of Keller-Segel model.  A key point in the argument is that $\overline v$ (see \Cref{r:K}) decays exponentially as $x\to\infty$.  In our setting, this is only possible if $K$ decays at least exponentially as $x\to\infty$; hence, a new argument is needed in order to include, e.g., cases (iii) and (iv) in \Cref{r:K} among others.  In some sense, their argument does not illuminate the reason for the minimal speed traveling waves to be that of the Fisher-KPP equation.  This is a partial motivation for \Cref{thm:slow}.  Specifically, our goal here is to develop {\em general} tools for proving and increase our understanding of the lack of speed-up by advection.

We briefly make note of another branch of research investigating the coupling of the Fisher-KPP equation with the {\em parabolic} Keller-Segel (as opposed to the {\em elliptic} model discussed above).  Here Salako and Shen~\cite{SalakoShenPP} have established the minimal speed as 2; see also a very elegant, simple dynamical systems argument of Bramburger~\cite{Bramburger}.  While this setting is technically not within the framework of the current manuscript, we believe the arguments used here could be extended to such a case.  Indeed, Salako and Shen's argument relies heavily on the representation formula of solution of the parabolic Keller-Segel as a convolution in space {\em and time}.  Finally, we mention that there is an ongoing program in understanding the effect of different choices of diffusion (see, e.g.,~\cite{JWXY} and references therein).

We also mention the work of Calvez~\cite{Calvez}, establishing traveling wave solutions for a mesoscopic (kinetic) model for chemotaxis not involving the logistic growth term $u(1-u)$.  Like the traveling waves in \Cref{thm:fast}, those constructed by Calvez are driven by intraspecific interactions of the population.  The differences between our model and that of Calvez are quite extreme and so the results, while quite similar in spirit, have very little connection on a technical level.

From a broader perspective, the work here contributes to the ongoing investigation of the influence of advection on front propagation in reaction-diffusion equations.  This is the other main motivation for this work.  When the advection is imposed, that is, independent of $u$, and has specific structure, implicit formulas exist~\cite{BerestyckiHamel, Xin_Book, MajdaSouganidis, EvansSouganidis}.  These are often difficult to quantify, and, to our knowledge, the only precise results are in certain asymptotic regimes such as when the advection is large~\cite{ElSmailyKirsch,HamelZlatos,NolenXin1,RyzhikZlatos,Zlatos} or small~\cite{NolenXin1,NolenXin5,HendersonSouganidis} or the diffusion is small~\cite{ElSmaily}.  We mention also the results for large cellular flows with Hamilton-Jacobi models by Xin and Yu~\cite{XinYuG,XinYuHJ} and the ABC flow by Xin, Yu, and Zlatos~\cite{XinYuZlatos}.

There are two major differences between the current setting and the results discussed in the previous paragraph: there the advection is {\em imposed} from the outside and ``uniform'' over the spatial domain, whereas the advection here is influenced by $u$ and is spatially localized near the front.  In fact, those two features are shared by many physically relevant settings (see also \cite{BerestyckiConstantinRyzhik, ConstantinKiselevRyzhik, CRRV} and the literature citing it).  Intimately tied to these features are two of the main difficulties: (1)~\eqref{e:the_equation} does not enjoy a comparison principle, meaning that techniques used in the settings described in the previous paragraph are not available, and (2) the ``advection'' $v$, in our setting, does not enjoy the specific structure that allowed precise calculations as in the works discussed in the previous paragraph.  To our knowledge, there are a dearth of results like \Cref{thm:fast} in which precise estimates are established for nontrivial, nonlinear effects coming from the advection.  We note that similar nonlinear effects on the propagation have been seen when the logistic term involves a nonlocal, nonlinear term~\cite{ABBP, BHR_nonlocal, CHMTD, Penington}, although the effects and underlying mathematical analysis are quite different.

Above we mentioned~\cite{ConstantinKiselevRyzhik} briefly, but this demands further discussion.  In this work, Constantin, Kiselev, and Ryzhik study a system composed of the Fisher-KPP equation for a quantity $T$, representing the temperature of a fluid undergoing combustion, coupled to a fluid equation such as the Navier-Stokes equation for a quantity $v$, representing the fluid velocity, with the Boussinesq approximation involving $T$ in a two dimensional cylinder.  This models the situation in which a buoyancy force arising from changes in $T$ induces a velocity field.  Heuristically, this setting is quite similar to the one considered in the current manuscript as it may be seen as a nonlocal coupling of the solution of a reaction-diffusion equation with its advection.  Constantin, Kiselev, and Ryzhik show that, under a smallness condition on the width of the cylinder and the strength of the coupling, the only traveling waves are speed 2 planar waves (it should be noted that non-planar waves exist in a variety of settings~\cite{BerestyckiConstantinRyzhik, ConstantinLewickaRyzhik, Henderson_Boussinesq, Lewicka, LewickaMucha, TexierPicardVolpert}).  This is quite similar to our \Cref{thm:slow}.  Their simple, elegant proof, however, hinges on the application of a Poincar\'e inequality on the cross-section of the cylinder (hence, the importance of the smallness of the cylinder) and is thus not applicable to the current setting.  A substantial difference, though, is that their proof hinges on showing that $v\equiv 0$, while, in our setting, $v\not\equiv 0$ and, thus, the goal is to show a {\em trivial} effect on the speed of a {\em non-trivial} advection.

One simplified, one-dimensional version of the reactive Boussinesq equation discussed above is one in which the equation for the advection is replaced by a forced Burgers equation~\cite{CRRV,BramburgerHenderson}.  In particular, in \cite{BramburgerHenderson}, Bramburger and the present author show a similar threshold: depending on a parameter representing the strength of gravity $\rho$, the minimum wave speed is $2$ if $\rho$ is below a certain critical value and grows like $\mathcal{O}(\rho^{1/3})$ otherwise.  Unfortunately, the upper and lower bounds, while explicit, match only in their order; precisely, we show a lower bound of $(3/2)^{1/3} \rho^{1/3}$ and an upper bound of $\sqrt 3 \rho^{1/3}$ as $\rho \to \infty$.  The proofs in~\cite{BramburgerHenderson} are based on transforming the Burgers-FKPP system to a system of ODE and constructing trapping regions.  This approach is not possible in the present setting as~\eqref{e:wave} cannot be converted into a system of ODE.

Finally, a recent thread of interesting work has begun on a related hyperbolic model.  Fu, Griette, and Magal~\cite{FuGrietteMagalWave} considered an inviscid version of the Keller-Segel-Fisher equation with repulsive chemotaxis; that is,~\eqref{e:the_equation} without a Laplacian, with the particular choice of $K$ given by \Cref{r:K}.(i), and with $\chi < 0$.  To our knowledge, this work and the one of Hamel and the present author~\cite{HamelHenderson} are the only works to notice the effect of repulsive chemotaxis on propagation.

In \cite{FuGrietteMagalWave}, Fu, Griette, and Magal construct a {\em discontinuous} traveling wave with speed of propagation $c\in (|\chi|/(2+|\chi|/\sigma),|\chi|/2)$, in our choice of parameters, under the assumption that $\chi/\sigma$ is positive and not too big; however, they do not obtain a general lower bound on the speed for {\em any} traveling wave as we do for the model considered here.  We note that their bounds match ours in the regime $1 \ll \chi \ll \sigma$, which is not surprising because, as our proof illuminates, propagation is entirely driven by the advection in this asymptotic regime; that is, diffusion does not contribute to the propagation.  Their proof is based on the size of the jump discontinuity in the constructed wave and does not apply to the present setting.  We note that an analogous threshold result for the asymptotic regime where $|\chi|(\sigma^{-1} + \sigma^2) \ll 1$ as in \Cref{thm:slow} is not possible in their setting as it is a consequence of the interplay between chemotaxis and diffusion.  Fu, Griette, and Magal additionally investigate the well-posedness of and convergence to $1$ of $u$ for the Cauchy problem.  We mention also their earlier works~\cite{FuGrietteMagal1, FuGrietteMagal2}. Recently, Griette, Magal, and Zhao were able to construct continuous waves for the inviscid model as well~\cite{GrietteMagalZhao}.

We briefly mention the work of Crooks as well as that of Crooks and Mascia on a class of models that depend nonlinearly on both $u$ and $u_x$, although locally~\cite{Crooks, CrooksMascia}.  Under some assumptions, they are able to analyze the speed of the minimal wave in a precise way.


%
%


\subsubsection*{Notation}

Throughout the paper, we use $C$ to denote a positive universal constant that may change line-by-line.  A universal constant is one that does not depend on $\chi$, $\sigma$, or $\tau$.  When constants have a dependence on one of these quantities, we denote it with a subscript; for example, $C_\chi$ is a constant that depends on $\chi$ but not on $\sigma$ or $\tau$.

When no confusion will arise, when using $L^p$ or other norms, we suppress the dependence on the domain; for example, we write $\|u\|_{L^\infty}$ instead of $\|u\|_{L^\infty(\R)}$.

\subsubsection*{Acknowledgements}

The author is grateful to Alexander Kiselev and Francois Hamel for helpful discussions early in the development of this work at the Banff workshop ``Interacting Particle Systems and Parabolic PDEs.''  This work was partially supported by NSF grants DMS-2003110 and DMS-2204615.

\section{Preliminaries}

We collect here some preliminary results on the size of $u$ and $v$ as well as the monotonicity of $u$.   These are used in the proof of \Cref{thm:fast}.  Since analogous results (with exactly the same proof) hold for the slab problem used in the proof of \Cref{thm:slow}, we state and prove them here and omit the proofs for the slab problem in the sequel.

\begin{lem}[Upper bound on $u$]\label{l:upper_bound}
	For any $c$, we have $\|u\|_{L^\infty} \leq \max\left\{1, \left(1 - \chi/\sigma\right)^{-1}\right\}$.
\end{lem}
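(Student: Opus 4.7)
The plan is a maximum-principle argument, where the only real subtlety is teasing apart the jump in $K$ at $0$ to extract a useful lower bound on $v_x$ at a maximum of $u$.

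First, I would reduce to the case where $\sup u$ is attained at a finite point. Since $u(-\infty)=1$, $u(+\infty)=0$, and $u$ is continuous and nonnegative, either $\sup u \leq 1$ (in which case $\|u\|_{L^\infty} \leq 1 \leq \max\{1,(1-\chi/\sigma)^{-1}\}$ and we are done), or $\sup u>1$ is attained at some finite $x_0\in\R$. At such an $x_0$, the wave equation \eqref{e:wave} combined with $u_x(x_0)=0$, $u_{xx}(x_0)\leq 0$, and $(vu)_x = v_x u + v u_x$ gives
\[
	v_x(x_0)\, u(x_0) \geq u(x_0)(1-u(x_0)),
		\qquad \text{i.e.,}\qquad u(x_0) \leq 1 - v_x(x_0).
\]

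The key step is to unpack $v_x(x_0)$. Because $K$ has a jump of $K(0^+)-K(0^-) = -1$ at the origin, its distributional derivative splits as $K' = K'_{\mathrm{ac}} - \delta_0$, where $K'_{\mathrm{ac}}\geq 0$ is the classical derivative on $\R\setminus\{0\}$, with total mass $\int K'_{\mathrm{ac}} = 1$ by \eqref{e:K_normalization} and the $L^1$ decay of $K$. Rescaling and convolving with $u$, I would obtain
\[
	v_x(x) = \frac{\chi}{\sigma}\bigl[(\psi_\sigma * u)(x) - u(x)\bigr],
		\qquad \psi_\sigma(y) := \frac{1}{\sigma} K'_{\mathrm{ac}}\!\left(\frac{y}{\sigma}\right),
\]
with $\psi_\sigma$ a probability density on $\R$. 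At the maximum $x_0$ of $u$, $(\psi_\sigma*u)(x_0)\in[0,u(x_0)]$, so the bracket is nonpositive.

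Finally, I would split on the sign of $\chi$. If $\chi\geq 0$, the bracket being bounded below by $-u(x_0)$ gives $v_x(x_0) \geq -(\chi/\sigma) u(x_0)$, whence $u(x_0) \leq 1 + (\chi/\sigma) u(x_0)$; since \eqref{e:chi_sigma} guarantees $\chi/\sigma<1$, this rearranges to $u(x_0)\leq (1-\chi/\sigma)^{-1}$. If $\chi<0$, then $v_x(x_0)$ is a product of two nonpositive quantities, hence nonnegative, so $u(x_0)\leq 1$. In either case $u(x_0) \leq \max\{1,(1-\chi/\sigma)^{-1}\}$, which is the desired bound. The only delicate point is the correct bookkeeping of the $\delta_0$ contribution in $K'$, which is what produces the $-u(x)$ term in $v_x$ and, ultimately, the $(1-\chi/\sigma)^{-1}$ in the bound; the rest is a routine maximum-principle computation.
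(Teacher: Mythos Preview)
Your proposal is correct and follows essentially the same approach as the paper: a maximum-principle argument combined with the decomposition of $v_x$ that isolates the $-\frac{\chi}{\sigma}u$ term coming from the jump of $K$ at the origin (the paper records this as \eqref{e:v_x} and omits the remaining details). One small slip: the displayed inequality should read $v_x(x_0)\,u(x_0)\leq u(x_0)(1-u(x_0))$ rather than $\geq$, since $u_{xx}(x_0)\leq 0$; your stated conclusion $u(x_0)\leq 1-v_x(x_0)$ and the subsequent case analysis are consistent with the corrected sign.
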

This lemma is a simple consequence of the maximum principle, using that $\|v\|_{L^\infty} \leq \| u \|_{L^\infty}$ and that, for any $x$,
\begin{equation}\label{e:v_x}
	 v_x(x) = - \frac{\chi}{\sigma} u(x) + \chi \int_0^\infty (K_{\sigma})_x(y)(  u(x-y) +  u(x+y))\, dy.
\end{equation}
Hence, we omit the proof.  We note that, in the above, there is a slight abuse of notation: we do not make any assumptions on the regularity of $K_\sigma$, so $(K_\sigma)_x$ should be understood in the sense of Radon measures\footnote{That is, $\mu = dK_\sigma$ is the measure on $[0,\infty)$ such that $\mu((a,b)) = K_\sigma(b) - K_\sigma(a)$ for any $0 \leq a < b$.}.

We observe some preliminary bounds on $v$ and $v_x$ that are deduced directly from the definition of $K$ and the maximum principle:
\begin{lemma}\label{l:convolution_bounds}
For any $c$, we have
\[
\begin{split}
	&\|v\|_{L^\infty}
		=|\chi|\, \|K_\sigma *u\|_{L^\infty}
		\le \frac{|\chi|}{2}\|u\|_{L^{\infty}}
	\qquad \text{ and }\\
	&\|v_x\|_{L^\infty}
		= |\chi|\, \|(K_\sigma*u)_x\|_{L^{\infty}}
		\le \frac{|\chi|}{\sigma}\,\|u\|_{L^{\infty}}.
\end{split}
\]
\end{lemma}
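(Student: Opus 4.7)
I would prove the two bounds in sequence; both rely on the structural properties of $K$ (oddness, monotonicity) beyond mere $L^1$-normalization, and a naive Young-convolution estimate would lose the sharp constants.

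For the bound on $\|v\|_{L^\infty}$, my first move would be to symmetrize the convolution using that $K$ is odd. Writing $v(x) = \chi \int_\R K_\sigma(y) u(x-y)\,dy$, splitting at $y=0$, and substituting $y \mapsto -y$ in the negative piece using $K_\sigma(-y) = -K_\sigma(y)$, one arrives at
\[
v(x) = \chi \int_0^\infty K_\sigma(y)\bigl[u(x-y) - u(x+y)\bigr]\,dy.
\]
The crucial observation is then that since $0 \leq u \leq \|u\|_{L^\infty}$, the \emph{difference} $|u(x-y) - u(x+y)|$ is bounded by $\|u\|_{L^\infty}$ rather than $2\|u\|_{L^\infty}$. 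Combining this with $\int_0^\infty |K_\sigma(y)|\,dy = \tfrac12$ — which follows from the oddness of $K$ together with $\|K\|_{L^1}=1$ — yields exactly the factor $\tfrac12$ in the claimed bound. (Bounding $u(x-y)$ and $u(x+y)$ separately by $\|u\|_{L^\infty}$ would produce only $|\chi|\|u\|_{L^\infty}$, which is exactly what direct Young would give.)

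For the bound on $\|v_x\|_{L^\infty}$, the starting point is the identity~\eqref{e:v_x} already recorded just before the lemma. The key structural input is the monotonicity of $K$: since $K$ is increasing on $(0,\infty)$, the Radon measure $(K_\sigma)_x$ is \emph{non-negative} on $(0,\infty)$ with total mass
\[
K_\sigma(\infty^-) - K_\sigma(0^+) = 0 - \Bigl(-\frac{1}{2\sigma}\Bigr) = \frac{1}{2\sigma},
\]
where I have used $|K(0^+)| = \tfrac12$ and that $K$ is monotone and integrable, hence tends to $0$ at $+\infty$. Consequently, in~\eqref{e:v_x}, the local term $-\frac{\chi}{\sigma}u(x)$ and the nonlocal term $\chi \int_0^\infty (K_\sigma)_x(y)\bigl[u(x-y)+u(x+y)\bigr]\,dy$ carry \emph{opposite signs} once one uses $u \geq 0$. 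Each is individually bounded in magnitude by $\frac{|\chi|}{\sigma}\|u\|_{L^\infty}$ — the local one trivially, and the nonlocal one because $2\|u\|_{L^\infty}\cdot \frac{1}{2\sigma} = \frac{\|u\|_{L^\infty}}{\sigma}$. The cancellation between the two then gives $|v_x(x)| \leq \frac{|\chi|}{\sigma}\|u\|_{L^\infty}$.

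I do not foresee a serious obstacle. The only subtlety is conceptual: in both estimates one must genuinely exploit both the non-negativity of $u$ and either the oddness (for $v$) or the monotonicity (for $v_x$) of $K$, since the sharp constants $\tfrac12$ and $\tfrac1\sigma$ are invisible to $L^1$-based convolution bounds. Both computations then reduce to routine manipulation, and the measure-theoretic interpretation of $(K_\sigma)_x$ is already addressed by the footnote attached to~\eqref{e:v_x}.
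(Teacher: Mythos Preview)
Your proposal is correct and matches the paper's (omitted) argument: the paper says the first bound uses ``the cancellations due to the positivity properties of $K$ and $u$'' to gain the factor $1/2$, which is exactly your symmetrization $v(x)=\chi\int_0^\infty K_\sigma(y)[u(x-y)-u(x+y)]\,dy$ together with $|u(x-y)-u(x+y)|\le\|u\|_{L^\infty}$. For the second bound the paper is terser, but your opposite-sign observation from~\eqref{e:v_x} and the monotonicity of $K$ is precisely the ingredient needed to obtain $|\chi|/\sigma$ rather than the $2|\chi|/\sigma$ that a naive total-variation Young estimate on the distributional derivative of $K_\sigma$ would produce.
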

Both inequalities in \Cref{l:convolution_bounds} follow from Young's inequality for convolutions.  The proof of the first inequality uses explicitly the cancellations due to the positivity properties of $K$ and $u$ in order to pick up the extra factor of $1/2$.  We omit the proof.

A useful tool is that $u$ is monotonic when it is small.  

\begin{lemma}[Monotonicity of $u$]\label{l:monotonicity}
	Suppose that $u$ is a traveling wave solution of~\eqref{e:the_equation} in the sense of \Cref{def:wave}.  Then $u_x \leq 0$ whenever:
	\[\begin{split}
		&u < \frac{1}{1- \frac{\chi}{2\sigma}}
			= \frac{1}{1 + \frac{|\chi|}{2\sigma}}
		\quad\text{if}\quad
		 \chi \leq 0
		 \qquad\text{ or } \qquad
		u < \frac{1 - \frac{2\chi}{\sigma}}{\left(1 - \frac{\chi}{\sigma}\right)^2}       
		\quad\text{if}\quad
		\chi > 0.
	\end{split}\]
\end{lemma}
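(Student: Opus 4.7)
The plan is to argue by contradiction. Suppose there exists a point $x_0\in\R$ with $u_x(x_0)>0$ at which $u(x_0)$ is strictly below the threshold stated in the lemma. The decisive step is choosing the right comparison point: I would let $x_m$ be a minimizer of $u$ on $(-\infty,x_0]$. Since $u(-\infty)=1$ and $u_x(x_0)>0$ forces $u(x_0-\eps)<u(x_0)$ for small $\eps>0$, this minimum is attained at some interior point $x_m\in(-\infty,x_0)$ with $u(x_m)<u(x_0)$. Being an interior minimum, the point $x_m$ supplies three ingredients at once: $u_x(x_m)=0$, $u_{xx}(x_m)\geq 0$, and, crucially, the one-sided monotonicity $u(x_m-y)\geq u(x_m)$ for every $y\geq 0$.

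Rewriting the wave equation~\eqref{e:wave} as $u_{xx}+(c-v)u_x=u(u+v_x-1)$ and plugging in the information at $x_m$ together with $u(x_m)>0$ yields the lower bound $u(x_m)\geq 1-v_x(x_m)$. To close the argument I need a matching upper bound on $v_x(x_m)$. Using~\eqref{e:v_x} and the identity $\int_0^\infty(K_\sigma)_x(dy)=-K_\sigma(0^+)=1/(2\sigma)$, I would rewrite
\[
    v_x(x_m)=\chi\int_0^\infty\{K_\sigma'\}(y)\bigl[u(x_m-y)+u(x_m+y)-2u(x_m)\bigr]\,dy,
\]
reducing the task to pointwise control of the bracket. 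Two complementary bounds are available: the monotonicity $u(x_m-y)\geq u(x_m)$ combined with $u(x_m+y)\geq 0$ yields a lower bound of $-u(x_m)$, while $u\leq\|u\|_\infty$ yields an upper bound of $2(\|u\|_\infty-u(x_m))$.

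In the case $\chi\leq 0$, the negative prefactor $\chi$ converts the first bound into the estimate $v_x(x_m)\leq(|\chi|/(2\sigma))u(x_m)$; substituting back gives $u(x_m)(1+|\chi|/(2\sigma))\geq 1$, i.e.\ $u(x_m)\geq 1/(1+|\chi|/(2\sigma))$, contradicting the hypothesis. In the case $\chi>0$, the positive prefactor makes the second bound useful, yielding $v_x(x_m)\leq(\chi/\sigma)(\|u\|_\infty-u(x_m))$; invoking $\|u\|_\infty\leq 1/(1-\chi/\sigma)$ from \Cref{l:upper_bound} then delivers $u(x_m)\geq(1-2\chi/\sigma)/(1-\chi/\sigma)^2$, again a contradiction. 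The main subtle point is precisely this choice of $x_m$: a naive local maximum of $u$ to the right of $x_0$ would only furnish $u\leq\|u\|_\infty$ on both sides of the test point, replacing $-u(x_m)$ in the pointwise bound by $-2u(x_m)$ and losing the factor $1/2$ that appears in the stated threshold. Selecting a one-sided minimum to the left is what breaks the symmetry and recovers that factor.
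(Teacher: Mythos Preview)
Your proof is correct and follows essentially the same approach as the paper. Both arguments pick a minimum point at which $u_x=0$, $u_{xx}\geq 0$, and the one-sided monotonicity $u(\cdot - y)\geq u$ for $y\geq 0$ holds, then combine the equation with the sign of $(K_\sigma)_x$ on $(0,\infty)$ to bound $v_x$; the only cosmetic difference is that the paper takes the \emph{leftmost} local minimum below the threshold, whereas you take the global minimizer on $(-\infty,x_0]$, but both choices deliver the same key one-sided inequality.
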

\begin{proof}
Fix any $\epsilon>0$.  Let $\overline u_{\rm mon}$ be $(1-\chi/2\sigma)^{-1}$ if $\chi \leq 0$ and $(1 - 2\chi/\sigma)/(1-\chi/\sigma)^2$ if $\chi > 0$.  We argue by contradiction, assuming that there exists $x_0$ such that $u(x_0) \leq \overline u_{\rm mon}-\eps$ and $u_x(x_0) > 0$.   Define
\[
	y_0 =
		\min\left\{ y : u(y) \text{ is a local minimum and } u(y) \leq \overline u_{\rm mon}-\eps\right\}.
\]
Since $u \to 1$ as $x\to-\infty$ and due to the existence of $x_0$, it follows that $y_0$ is well-defined.  We point out that $u(x) \geq u(y_0)$ for all $x <y_0$ (if this were not true, we could find another point in the set defining $y_0$ that is smaller than $y_0$).

We re-write~\eqref{e:wave} as
\[
	 u_{xx}
		- \left(v - c\right) u_x
		= - u\left(1 - u - v_x\right).
\]
We claim that $1 - u - v_x > 0$ at $y_0$.  If this were true, then, since $y_0$ is the location of a local minimum, we find
\[
	0
		\leq u_{xx} - \left(v - c\right) u_x
		= - u \left(1 - u - v_x\right)
		< 0,
\]
which is a contradiction.  Hence, due to this and the arbitrariness of $\eps$, the proof is finished if we can establish the positivity of $1 - u - v_x$.

We now establish this inequality.  Using~\eqref{e:v_x}, we notice that, at $y_0$
\begin{equation}\label{e.c6231}
\begin{split}
	1 - u - v_x
		&= 1 - u \left(1 - \frac{\chi}{\sigma}\right)
			- \chi \int_0^\infty (K_\sigma)_y(y) (u(y_0 - y) + u(y_0 +y))\, dy.
\end{split}
\end{equation}
When $\chi \leq 0$, $-\chi (K_\sigma)_x\geq 0$.   Hence, using the choice of $y_0$, we find
\[\begin{split}
	- \chi \int_0^\infty (K_\sigma)_y(y) (u(y_0 - y) + u(y_0 +y))\, dy
		&\geq - \chi \int_0^\infty (K_\sigma)_y(y) u(y_0 - y)\, dy\\
		&\geq - \chi u(y_0) \int_0^\infty (K_\sigma)_y(y)\, dy
		= \frac{|\chi|}{2\sigma} u(y_0).
\end{split}\]
Using this along with the definition of $\overline u_{\rm mon}$ in~\eqref{e.c6231}, we find, at $y_0$,
\[
	1 - u - v_x
		\geq 1 - u \Big(1 + \frac{|\chi|}{2\sigma}\Big)
		\geq 1 - (\overline u_{\rm mon} -\eps) \Big(1 + \frac{|\chi|}{2\sigma}\Big)
		> 0.
\]
The proof is thus finished in this case.  

When $\chi >0$, $-\chi (K_\sigma)_y \leq 0$.  Hence, by \Cref{l:upper_bound},
\[
	-\chi \int_0^\infty (K_\sigma)_y(y) (u(x - y) + u(x +y))\, dy
		\geq -2 \|u\|_{L^\infty} \chi \int_0^\infty (K_\sigma)_y(y)\, dy
		\geq -\frac{\chi}{\sigma} \frac{1}{1-\frac{\chi}{\sigma}}.
\]
Including this inequality in~\eqref{e.c6231} and using the definition of $\overline u_{\rm mon}$, we find
\[
	1 - u(x) - v_x(x)
		> 1 - (\overline u_{\rm mon} -\eps) \left(1 - \frac{\chi}{\sigma}\right)
			- \frac{\chi}{\sigma} \frac{1}{1 - \frac{\chi}{\sigma}}
		> 0.
\]
In both cases, we have established that $1 - u - v_x > 0$, which completes the proof.
\end{proof}

Finally we recall that $u$ converges to $1$ on the left when $\chi/\sigma$ is appropriately bounded above.  We cite~\cite{HamelHenderson} here as it works out-of-the-box; however, the original result is~\cite[Theorem C]{SalakoShenPE}, which is stated only for the Keller-Segel model but whose proof works analogously in the more general case.  We note that the proof of the result below requires~\eqref{e:chi_sigma} and it may not be true without this assumption.

\begin{lemma}\label{l:one}\cite[Corollary~2.3]{HamelHenderson}
Suppose that $(c,u)$ is a traveling wave solution of~\eqref{e:the_equation} in the sense that $\lim_{x\to\infty} u(x) = 0$, $c>0$, and $(c,u)$ satisfies~\eqref{e:wave}.  Then $\lim_{x\to-\infty} u(x) = 1$.
\end{lemma}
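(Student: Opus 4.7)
My plan is to show $u(x) \to 1$ as $x \to -\infty$ via a translation-compactness argument, analyzing the limit profile at its extrema, together with an integrated identity to rule out the degenerate case where $\liminf u = 0$. Set $\underline u := \liminf_{x\to -\infty} u(x)$ and $\bar u := \limsup_{x\to -\infty} u(x)$, so $0 \le \underline u \le \bar u \le \|u\|_{L^\infty}$ by \Cref{l:upper_bound}. From~\eqref{e:wave} and this uniform $L^\infty$ bound, standard elliptic regularity yields translation-invariant bounds on $u \in C^{2,\alpha}_{loc}$ and, through the convolution formula, on $v \in C^{1,\alpha}_{loc}$. For any sequence $x_n \to -\infty$ I extract a subsequence along which $u_n := u(\cdot + x_n) \to \tilde u$ in $C^2_{loc}$; since $K_\sigma \in L^1$, dominated convergence then gives $v_n \to \tilde v := \chi K_\sigma * \tilde u$ in $C^1_{loc}$. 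The limit $\tilde u$ satisfies~\eqref{e:wave} on all of $\R$, and $\tilde u(x) \in [\underline u, \bar u]$ for every $x$.

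I then run an extremum analysis on the limit. Choosing $x_n$ so that $u(x_n) \to \underline u$ forces $\tilde u$ to attain its global minimum at $0$; rewriting~\eqref{e:wave} as $\tilde u_{xx} + (c - \tilde v) \tilde u_x + \tilde u(1 - \tilde u - \tilde v_x) = 0$ and evaluating at $0$ (where $\tilde u_x(0) = 0$ and $\tilde u_{xx}(0) \ge 0$) yields, provided $\underline u > 0$, the inequality $\underline u \ge 1 - \tilde v_x(0)$. Expanding $\tilde v_x(0)$ via~\eqref{e:v_x}, noting that $\chi(K_\sigma)_y$ has a fixed sign on $(0,\infty)$, and using $\tilde u(\pm y) \in [\underline u, \bar u]$, I obtain $\underline u \ge 1$ when $\chi \le 0$, and $\underline u(1 - \chi/\sigma) + (\chi/\sigma)\bar u \ge 1$ when $\chi > 0$. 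Repeating with $x_n$ chosen so $u(x_n) \to \bar u$, the new limit achieves its maximum at $0$, and the analogous reasoning yields $\bar u \le 1$ when $\chi \le 0$, or $\bar u(1 - \chi/\sigma) + (\chi/\sigma) \underline u \le 1$ when $\chi > 0$. For $\chi \le 0$ the bounds $\underline u \ge 1$ and $\bar u \le 1$ together force $\underline u = \bar u = 1$. For $\chi > 0$, subtracting the two displayed inequalities gives $(\bar u - \underline u)(2\chi/\sigma - 1) \ge 0$, and since~\eqref{e:chi_sigma} yields $\chi/\sigma < 1/2$, we conclude $\bar u = \underline u =: L$. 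Then $\tilde u \equiv L$ is constant, and because $K$ is odd we have $\tilde v \equiv 0$ and $\tilde v_x \equiv 0$, so the equation degenerates to $L(1-L) = 0$, forcing $L = 1$.

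The main obstacle I anticipate is excluding the degenerate case $\underline u = 0$, in which the minimum inequality above is vacuous. For this I plan to integrate~\eqref{e:wave} from $x$ to $+\infty$, using $u(+\infty) = 0$ and the standard fact that boundedness plus the equation imply $u_x \to 0$ at $+\infty$, to obtain the identity
\[
	\int_x^\infty u(1 - u)\, dy = c\, u(x) - v(x) u(x) + u_x(x).
\]
Along a subsequence $x_n \to -\infty$ with $u(x_n) \to 0$, the $C^2_{loc}$ convergence of translates forces $v(x_n), u_x(x_n) \to 0$, so the right-hand side tends to $0$. When $\chi \le 0$, $\|u\|_{L^\infty} \le 1$ gives $u(1-u) \ge 0$, so the tail integrals are monotone in $x_n$ and must vanish in the limit, yielding $u(1-u) \equiv 0$; this contradicts $u > 0$ together with $u(+\infty) = 0$. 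When $\chi > 0$ the integrand may change sign, and I would first apply the strong maximum principle at the interior minimum $\tilde u(0) = 0$ to conclude $\tilde u \equiv 0$, then combine the resulting local smallness of $u$ near each $x_n$ with the mechanism of \Cref{l:monotonicity} (which rules out local minima of $u$ at small values) to reach a contradiction with $c > 0$ and the strict positivity of $u$.
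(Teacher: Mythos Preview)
The paper does not supply its own proof of this lemma; it simply cites \cite[Corollary~2.3]{HamelHenderson} (and mentions \cite[Theorem~C]{SalakoShenPE} as the original source), noting only that the standing assumption~\eqref{e:chi_sigma} is needed. So there is no in-paper argument to compare against, and your proposal fills a genuine gap. Your translation--compactness argument together with the extremum analysis for the case $\underline u>0$ is correct and is in the spirit of the cited references; the computation with~\eqref{e:v_x} and the use of~\eqref{e:chi_sigma} to force $\underline u=\bar u$ when $\chi>0$ is exactly right. The integrated identity handling of $\underline u=0$ for $\chi\le 0$ is also correct (one small point: you do not need $v(x_n)\to 0$, only $v(x_n)u(x_n)\to 0$, which is immediate from boundedness of $v$).

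There is, however, a real gap in the $\chi>0$, $\underline u=0$ case. First, be careful: \Cref{l:monotonicity} as stated assumes $u$ is a wave in the sense of \Cref{def:wave}, which already includes $u(-\infty)=1$, so citing it would be circular. You are right that its \emph{mechanism} suffices, and indeed for $\chi>0$ the proof of \Cref{l:monotonicity} rules out any interior local minimum of $u$ with value below $\bar u_{\rm mon}=(1-2\chi/\sigma)/(1-\chi/\sigma)^2$ using only the $L^\infty$ bound from \Cref{l:upper_bound}; make this explicit. Second, your final sentence does not spell out the contradiction, and two sub-cases need separate treatment. If $\bar u>0$, interlace points $x_n\to-\infty$ with $u(x_n)\to 0$ and $y_n\to-\infty$ with $u(y_n)\to\bar u$; between two $y$-points bracketing an $x$-point there is a local minimum with value below any prescribed $\eps$, contradicting the mechanism once $\eps<\bar u_{\rm mon}$. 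If instead $\bar u=0$, so $u(-\infty)=0$ as a genuine limit, the ``no small local minima'' argument forces $u$ to be monotone increasing on $(-\infty,A)$ where $A=\inf\{x:u(x)\ge\bar u_{\rm mon}\}$; integrating~\eqref{e:wave} from $-\infty$ to any $x<A$ gives
\[
0<\int_{-\infty}^x u(1-u)\,dy \;=\; -(c-v(x))\,u(x)-u_x(x)\;\le\;-(c-v(x))\,u(x),
\]
so $v(x)>c>0$ for all $x<A$, which contradicts $v(x)\to 0$ as $x\to-\infty$ (dominated convergence, since $u(x-\cdot)\to 0$). This is where the hypothesis $c>0$ is actually used. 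With these two sub-cases filled in, your argument is complete.
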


\section{Speed 2 traveling waves: \Cref{thm:existence_and_speed_lower} and \Cref{thm:slow}}

In this section, we construct traveling waves whose speed is unperturbed off the case $\chi=0$ when $\chi(\sigma^2 + \sigma^{-1})$ is sufficiently small.  We begin by constructing traveling waves in the usual fashion; that is, building solutions to the problem on a finite domain (a ``slab'') via the Leray-Schauder fixed point theorem and then taking a limit as this approximating domain tends to $\R$.  This is the proof of \Cref{thm:existence_and_speed_lower}.  

After, we return to this slab problem to analyze the bounds on $c$, from which we obtain the desired upper bound.  This is the proof of \Cref{thm:slow}.  We separate the precise bounds into another step for clarity since its proof is quite technical and is independent of the rest of the construction.

\subsection{The problem on a finite slab}

Let $a>0$, $\tau \in [0,1]$, and $\theta \in (0,\theta_0)$ for
\be\label{e.theta_0}
	\theta_0 < \min\Big\{ \frac{1}{100}, \frac{1- \frac{2|\chi|}{\sigma}}{1+ \frac{|\chi|}{\sigma}}\Big\}
\ee
that is chosen in the sequel.  
Throughout this section $\theta_0$ is a constant decreased in order to encode the various statements ``for $\theta$ sufficiently small'' below (the notation is introduced in case the upper bound $\theta_0$ of $\theta$ must be referenced).  The parameter $\tau$ is for the fixed point theorem referenced above.

Consider
\begin{equation}\label{e:slab}
	\begin{cases}
		- c u_x + \tau (\tilde v u)_x = u_{xx} + u(1-u), \qquad &\text{ in } (-a,a),\\
		u(-a) = 1, \quad u(a) = 0, \quad \max_{x \geq 0} u(x) = \theta,
	\end{cases}
\end{equation}
where we define $\tilde v = \chi K_\sigma * \tilde u$ with
\begin{equation}\label{e.u_tilde}
	\tilde u(x) =
			\begin{cases}
				1, \quad &\text{ if } x \leq -a,\\
				u(x), &\text{ if } x \in (-a,a),\\
				0, &\text{ if } x \geq a.
			\end{cases}
\end{equation}
It is apparent that were $u$ and $c$ to exist, both would depend on $a$ and $\theta$.  We suppress that dependence in the notation in this subsection.  In the sequel, however, we denote $u=u^a$ and $c=c^a$ in order to clarify the presentation of the limit $a\to\infty$.

Our goal is to produce a solution to $(c,u)$ to~\eqref{e:slab} when $a$ is sufficiently large and $\theta$ is sufficiently small.  The first step is obtaining {\em a priori} estimates on any solution $(c,u)$ of~\eqref{e:slab}.

\subsubsection{Bounds on $u$}

It is clear that the maximum principle yields the same upper bound of $u$ in \Cref{l:upper_bound} and the same monotonicity properties as \Cref{l:monotonicity}.  We collect this in the following (recall~\eqref{e.theta_0}):
\begin{lem}[Upper bound on and monotonicity of $u$]\label{l:upper_bound_a}
		For all $a>0$ and $\theta \in (0,\theta_0)$, we have
		\[
			\|u\|_{L^\infty} \leq \max\left\{1, \left(1 - \chi/\sigma\right)^{-1}\right\}
				\qquad\text{and}\qquad
			u_x \leq 0 \ \text{ on } [0,a].
		\]
\end{lem}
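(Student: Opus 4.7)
The plan is to run the same arguments as the proofs of \Cref{l:upper_bound} and \Cref{l:monotonicity}, making only the minor modifications required by the slab setting. The starting observation is that the boundary values $u(-a) = 1$ and $u(a) = 0$ together with the definition~\eqref{e.u_tilde} of $\tilde u$ give $\|\tilde u\|_{L^\infty} = \max\{1, \|u\|_{L^\infty}\}$, so every pointwise bound on $\tilde v = \chi K_\sigma \ast \tilde u$ and its derivative $\tilde v_x$ can be deduced from the identities and bounds used for $v$ and $v_x$ on the whole line, with $\tilde u$ replacing $u$.

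For the upper bound I would argue by contradiction. If the conclusion failed then, because $u(\pm a) \in \{0,1\}$, the maximum of $u$ is attained at some $x_0 \in (-a,a)$ with $u(x_0) > \max\{1,(1-\chi/\sigma)^{-1}\}$. Evaluating~\eqref{e:slab} at $x_0$, using $u_x(x_0) = 0$ and $u_{xx}(x_0) \leq 0$, gives $u(x_0) \leq 1 - \tau \tilde v_x(x_0)$. I would then apply~\eqref{e:v_x} with $\tilde u$ in place of $u$, estimate $-\tilde v_x(x_0)$ in terms of $\|\tilde u\|_{L^\infty}$ by splitting on the sign of $\chi$ (using that $-\chi(K_\sigma)_x$ is nonpositive or nonnegative on $(0,\infty)$ according to the sign of $\chi$, exactly as in the argument for \Cref{l:monotonicity}), and conclude $u(x_0) \leq \max\{1,(1-\chi/\sigma)^{-1}\}$, the desired contradiction.

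For the monotonicity claim I would reproduce the contradiction argument of \Cref{l:monotonicity}. Assume $u_x(x_0) > 0$ at some $x_0 \in (0,a)$; the constraint $\max_{x\geq 0} u = \theta$ together with~\eqref{e.theta_0} forces $u(x_0) \leq \theta < \overline u_{\rm mon} - \epsilon$ for some $\epsilon > 0$, and since $u(-a) = 1 > \overline u_{\rm mon}$, continuity produces an earliest local minimum $y_0 \in (-a, x_0)$ with $u(y_0) \leq \overline u_{\rm mon} - \epsilon$ and $u(x) \geq u(y_0)$ for all $x \in (-a, y_0)$. Evaluating~\eqref{e:slab} at $y_0$ yields
\[
	u_{xx}(y_0) = -u(y_0)\bigl(1 - u(y_0) - \tau \tilde v_x(y_0)\bigr),
\]
and the positivity of $1 - u(y_0) - \tau \tilde v_x(y_0)$ follows from the exact same case analysis on the sign of $\chi$ as in \Cref{l:monotonicity}, now with $\tilde u$ in the integral arising from~\eqref{e:v_x} and the upper bound on $\|\tilde u\|_{L^\infty}$ just established. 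This forces $u_{xx}(y_0) < 0$, contradicting that $y_0$ is a local minimum.

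The one wrinkle specific to the slab is to confirm that $y_0 \in (-a,a)$ is a genuine local minimum and not a boundary infimum; this is immediate because $u(-a) = 1 > \overline u_{\rm mon}$ while $u(y_0) \leq \overline u_{\rm mon} - \epsilon$, and because $u_x(x_0) > 0$ prevents $y_0$ from coinciding with $x_0$. The parameter $\tau \in [0,1]$ multiplies the advection term but plays no role in the sign analysis. Since both arguments are essentially a line-by-line transcription of \Cref{l:upper_bound} and \Cref{l:monotonicity}, I do not expect any serious obstacle, which is presumably why the authors omit the proof.
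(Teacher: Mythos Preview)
Your proposal is correct and matches the paper's approach exactly: the paper omits the proof with the remark that the maximum principle yields the same upper bound as in \Cref{l:upper_bound} and the same monotonicity as in \Cref{l:monotonicity}, and you have carried out precisely that transcription, correctly handling the slab-specific issues (boundary values, the extension $\tilde u$, and the parameter $\tau$).
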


In addition, a simple computation using the definition of $\tilde v$ yields
\begin{equation}\label{e:v_bound}
	\|\tilde v\|_{L^\infty}
		\leq \frac{|\chi|}{2} \max\{1, (1 - \chi/\sigma)^{-1}\}
	\quad \text{ and } \quad
	\|\tilde v_x\|_{L^\infty}
		\leq \frac{|\chi|}{\sigma}\max\{1, (1-\chi/\sigma)^{-1}\}.
\end{equation}
See \Cref{l:convolution_bounds} for similar bounds.

Next we get a preliminary, suboptimal bound on the front speed that is sufficient for the fixed point argument.  

\begin{lem}[Upper bound on the speed]\label{l:speed_upper_bound}
	There exists $\theta_0>0$ and $C>0$ such that if $\theta \in (0,\theta_0)$ and $a$ is sufficiently large, depending only on $\theta$, then
	\[
		c
			\leq 2\sqrt{1 + \frac{\tau|\chi|}{\sigma}} + \frac{\tau|\chi|}{2}
			\leq 2
			+ \frac{\tau |\chi|}{\sigma}
			+ \frac{\tau |\chi|}{2}.
	\]
\end{lem}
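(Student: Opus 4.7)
The plan is to argue by contradiction: assuming $c > c_\ast := 2\sqrt{1 + \tau|\chi|/\sigma} + \tau|\chi|/2$, I will build an explicit exponential supersolution $\phi$ on the entire slab $[-a,a]$ and use comparison to force $\theta = u(0) \leq \phi(0)$, which fails once $a$ is large. Here I am using the monotonicity in \Cref{l:upper_bound_a} to identify the normalization $\max_{x\geq 0} u = \theta$ with $u(0) = \theta$, so the desired contradiction is between a fixed positive value at $x=0$ and an $a$-dependent exponentially small upper bound.

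First I would rewrite the slab equation as $u_{xx} + (c - \tau \tilde v) u_x + (1 - u - \tau \tilde v_x) u = 0$ and drop the non-positive $-u^2$ contribution, so $u$ is a subsolution of the linear operator
\[
	L\psi := \psi_{xx} + (c - \tau \tilde v) \psi_x + (1 - \tau \tilde v_x) \psi,
\]
that is, $Lu \geq 0$. The natural candidate supersolution is $\phi(x) := e^{-\lambda(x+a)}$ with $\lambda := \sqrt{1 + \tau\|\tilde v_x\|_{L^\infty}}$. Using $\tilde v \leq \|\tilde v\|_\infty$ and $-\tilde v_x \leq \|\tilde v_x\|_\infty$ pointwise,
\[
	\frac{L\phi}{\phi}
		= \lambda^2 - \lambda(c - \tau\tilde v) + (1 - \tau\tilde v_x)
		\leq 2(1 + \tau \|\tilde v_x\|_\infty) - \lambda\bigl(c - \tau \|\tilde v\|_\infty\bigr),
\]
which is $\leq 0$ precisely when $c \geq 2\sqrt{1 + \tau\|\tilde v_x\|_\infty} + \tau \|\tilde v\|_\infty$. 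The bounds in \eqref{e:v_bound}, combined with \eqref{e:chi_sigma}, show that this is implied by the assumed $c \geq c_\ast$.

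The subtle point is that the zeroth-order coefficient $1 - \tau \tilde v_x$ of $L$ need not be non-positive, so the comparison principle is not available for $L$ directly. The remedy is the classical trick of dividing by the positive supersolution: setting $w := u/\phi \geq 0$, a direct substitution gives
\[
	w_{xx} + (c - \tau \tilde v - 2\lambda)\, w_x
		= w^2 \phi - \frac{L\phi}{\phi}\, w
		\geq 0,
\]
since both summands on the right are non-negative. The weak maximum principle for this drift-diffusion operator, which has no zeroth-order term, yields $w \leq \max(w(-a), w(a)) = \max(1/1, 0) = 1$ on $[-a, a]$, i.e.\ $u \leq \phi$.

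Evaluating at $x = 0$ gives $\theta = u(0) \leq \phi(0) = e^{-\lambda a}$, hence $a \leq \lambda^{-1}\log(1/\theta) \leq \log(1/\theta)$ since $\lambda \geq 1$. This is violated as soon as $a > \log(1/\theta)$, yielding the contradiction and the bound $c \leq c_\ast$. The second inequality in the lemma is just the elementary $2\sqrt{1 + x} \leq 2 + x$. The main technical obstacle is precisely the potentially positive zeroth-order term in $L$, which is neatly absorbed by the $u/\phi$ substitution; a secondary (bookkeeping) point is reducing the $L^\infty$ bounds from \eqref{e:v_bound} to the clean constants $|\chi|/\sigma$ and $|\chi|/2$ displayed in $c_\ast$, where one uses the standing assumption \eqref{e:chi_sigma}.
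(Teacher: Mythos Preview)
Your proof is correct and is essentially the same strategy as the paper's: compare $u$ with an exponential barrier on the slab and force $\theta = u(0) \leq e^{-\lambda a}$, a contradiction for $a$ large. The paper packages the comparison as a touching-point (sliding) argument with $\psi_A = Ae^{-\lambda x}$, choosing $\lambda = (c-\tau|\chi|/2)/2$ and reading off the bound on $c$ from the inequality at the interior touch point; you instead assume $c>c_\ast$, fix $\lambda = \sqrt{1+\tau\|\tilde v_x\|_\infty}$, and use the quotient $w=u/\phi$ to recover a zero-order-free maximum principle on the whole interval. These are two standard ways to justify the same comparison---your quotient trick explicitly neutralizes the possibly positive coefficient $1-\tau\tilde v_x$, while the paper's touching-point computation handles it implicitly since only pointwise information is used. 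The minor bookkeeping caveat you flag (the extra factor $\max\{1,(1-\chi/\sigma)^{-1}\}$ in~\eqref{e:v_bound} when $\chi>0$) is present in the paper's proof as well.
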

\begin{proof}
	Let $\psi_A(x) = A e^{-\lambda x}$ for $\lambda >0$ to be chosen.  Let $A_0 = \min\{A \in \R : \psi_A \geq u\}.$ 
	First, notice that this is well-defined since  $A = e^{\lambda a} \|u\|_{L^\infty}$ is in the set, while no negative $A$ are in the set due to the nonnegativity of $u$.  In fact,  $A_0\geq \theta>0$ since
	\[
		u(0) = \theta 
			\quad\text{ and }\quad
		\psi_A(0)
			= A.
	\]
	Finally, by continuity, there exists a point $x_0\in [-a,a]$ such that $\psi_{A_0}(x_0) = u(x_0)$.

	Since $\psi_{A_0} > 0$ and $u(a) = 0$, then $x_0 \in [-a,a)$.  In addition, if $x_0 = -a$, then $1 = u(-a) = \psi_{A_0}(-a) = A_0 e^{\lambda a}$.  Hence,
	\[
		\theta
			= u(0)
			\leq \psi_{A_0}(0)
			= A_0
			= e^{-\lambda a}.
	\]
	This cannot happen if $a > \lambda^{-1}\log(1/\theta)$.
	
	Thus, we have that $x_0 \in (-a,a)$.  Then $\psi_{A_0} - u$ has an interior minimum at $x_0$ of zero, which yields
	\[
		u(x_0) = \psi_{A_0}(x_0),
		\quad
		\psi_{A_0,x}(x_0) = u_x(x_0), 
		\quad\text{ and }\quad
		\psi_{A_0,xx}(x_0)\geq u_{xx}(x_0).
	\]
	Using~\eqref{e:slab}, these identities, \eqref{e:v_bound}, and~\eqref{e:chi_sigma}, we find, at $x_0$,
	\[\begin{split}
		0
			&= u_{xx} + u(1-u) + c u_x - \tau(\tilde v u)_x
			\leq \psi_{A_0}\left( \lambda^2 + 1 - c\lambda +\lambda \tau \tilde v - \tau \tilde v_x\right)\\
			&\leq \psi_{A_0}\left( \lambda^2 + 1 - c\lambda + \frac{\tau |\chi| \lambda}{2} + \tau \frac{|\chi|}{\sigma}\right).
	\end{split}\]
	
	We now choose $\lambda = (c- \tau |\chi|/2)/2$ to find, using the positivity of $\psi_{A_0}$,
	\[\begin{split}
		0
			&\leq - \frac{(c - \tau |\chi|/2)^2}{4}
				+ 1 + \tau \frac{|\chi|}{\sigma}.
	\end{split}\]
	This implies that $c \leq \tau |\chi|/2 + 2 \sqrt{1 + \tau |\chi|/\sigma}$, which, after an application of Taylor's theorem, concludes the proof.
\end{proof}

We now know that all coefficients in our equation are bounded in $L^\infty$, which allows us the use of the Harnack inequality.  We use this to derive an estimate quantifying the fact that if $u$ is ``small'' somewhere then it is small ``nearby.''

\begin{lem}[Upper bound on spatial growth]\label{l:harnack}
	There exists a constant $C_{\chi,\chi/\sigma}$ such that
	\[
		u(x) \leq C_{\chi,\chi/\sigma} u(y) e^{C_{\chi,\chi/\sigma}|x-y|}
			\qquad\text{ for all } x,y \in [-a+1, a-1].
	\]
\end{lem}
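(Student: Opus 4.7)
\emph{Proof plan.} The plan is to rewrite the equation for $u$ as a linear ODE with $L^\infty$ coefficients, apply the standard interior Harnack inequality to the positive solution $u$, and chain the resulting local comparisons along a segment joining $y$ to $x$ to obtain the exponential bound.

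First, expanding $\tau(\tilde v u)_x = \tau \tilde v_x u + \tau \tilde v u_x$ in~\eqref{e:slab} puts the equation in non-divergence form
\[
	u_{xx} + (c - \tau \tilde v)\, u_x + (1 - u - \tau \tilde v_x)\, u = 0 \qquad \text{on } (-a, a).
\]
By \Cref{l:upper_bound_a}, $\|u\|_{L^\infty}$ is bounded in terms of $\chi/\sigma$; by \Cref{l:speed_upper_bound}, $|c|$ is bounded in terms of $\chi$ and $\chi/\sigma$ for $a$ large; and by~\eqref{e:v_bound}, $\|\tilde v\|_{L^\infty}$ and $\|\tilde v_x\|_{L^\infty}$ are bounded in terms of $\chi$ and $\chi/\sigma$. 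Hence the coefficients $b(x) := c - \tau \tilde v(x)$ and $d(x) := 1 - u(x) - \tau \tilde v_x(x)$ are uniformly bounded by a constant depending only on $\chi$ and $\chi/\sigma$ (and not on $a$, $\tau$, or $\theta$). Moreover, because $u(-a) = 1$ and $u \not\equiv 0$, the strong maximum principle applied to this linear equation yields $u > 0$ on $(-a, a)$.

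Next, I would invoke the classical interior Harnack inequality for nonnegative solutions of second-order linear uniformly elliptic equations with $L^\infty$ coefficients: there is $H = H(\chi, \chi/\sigma)$ such that, whenever $[z-1, z+1] \subset [-a, a]$,
\[
	\sup_{[z - 1/2,\, z + 1/2]} u \leq H \inf_{[z - 1/2,\, z + 1/2]} u.
\]
Now given $x, y \in [-a+1, a-1]$, pick points $z_0 = y, z_1, \ldots, z_N = x$ along the segment between them with $|z_i - z_{i-1}| \leq 1/2$ and $N \leq 2|x - y| + 1$; each interval $[z_{i-1}, z_i]$ is then contained in a Harnack interval $[w_i - 1/2, w_i + 1/2]$ centered at its midpoint $w_i$, and this midpoint satisfies $[w_i - 1, w_i + 1] \subset [-a, a]$ since $w_i \in [-a+1, a-1]$. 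Iterating the Harnack bound gives $u(z_i) \leq H\, u(z_{i-1})$ for each $i$, and therefore
\[
	u(x) \leq H^{N} u(y) \leq H \cdot e^{(2\log H)\,|x - y|}\, u(y),
\]
which is the claim with $C_{\chi, \chi/\sigma} := \max\{H,\, 2 \log H\}$.

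I do not expect any serious obstacle: the restriction $x, y \in [-a+1, a-1]$ is precisely what keeps all Harnack intervals inside $(-a, a)$, so no boundary Harnack estimate is needed, and every required a priori bound on $u$, $c$, $\tilde v$, and $\tilde v_x$ has already been established. The only technical care lies in tracking the dependence of constants on $\chi$ and $\chi/\sigma$ (as opposed to $a$, $\tau$, or $\theta$), which is automatic from the uniform nature of the earlier estimates.
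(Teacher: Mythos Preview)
Your proposal is correct and takes essentially the same approach as the paper, whose entire proof reads: ``This follows by a simple iteration of the Harnack inequality $n \approx |x-y|$ times.'' You have simply filled in the details of that iteration carefully.
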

\begin{proof}
This follows by a simple iteration of the Harnack inequality $n \approx |x-y|$ times.
\end{proof}

\begin{lem}[Lower bound on the speed]\label{l:speed_lower_bound}
	For all $\epsilon>0$, there exists $\theta_\e$ and $a_\e$ such that if $\theta \leq \theta_\e$ and $a> a_\e$ then $c\geq 2 - \epsilon$.
\end{lem}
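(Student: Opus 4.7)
The proof is by contradiction via a linear subsolution / sliding argument. Suppose the claim fails: there exist $\epsilon > 0$ and sequences $\theta_n \downarrow 0$, $a_n \uparrow \infty$ with corresponding slab solutions $(c_n, u_n)$ of~\eqref{e:slab} satisfying $c_n \leq 2-\epsilon$. By \Cref{l:upper_bound_a}, $u_n$ is decreasing on $[0, a_n]$ and hence $u_n \leq \theta_n$ there. Rearranging~\eqref{e:slab} and setting
\[
	\tilde{\mathcal{L}}_n w := w_{xx} + (c_n - \tau\tilde v_n)w_x + (1 - \theta_n - \tau\tilde v_{n,x})w,
\]
we find $\tilde{\mathcal{L}}_n u_n = u_n(u_n - \theta_n) \leq 0$ on $[0, a_n]$. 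The plan is to construct a strict positive subsolution $\phi_n$ of $\tilde{\mathcal{L}}_n$ on an interval $[\alpha_n, \alpha_n + L] \subset [0, a_n]$ that vanishes at its endpoints, and then to slide it against $u_n$ to produce a contradiction.

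The first step is to localize the chemotactic coefficients. Because $K \in L^1(\R)$ and $\int K_\sigma = 0$ (since $K$ is odd), for any $\delta > 0$ one can choose $M = M(K, \sigma, \delta)$ so that
\[
	|\tilde v_n(x)|,\ |\tilde v_{n,x}(x)| \leq C_{\chi,\sigma}\,(\theta_n + \delta)
	\qquad\text{for all } x \in [M, a_n - M].
\]
The bound on $\tilde v_n$ follows from $\tilde v_n(x) = \chi\int K_\sigma(y)[\tilde u_n(x-y) - \tilde u_n(x)]\,dy$, split into $|y| \leq M$ (on which the shifted argument lies in $[0, a_n]$ so $\tilde u_n \leq \theta_n$) and the $L^1$-small tail $|y| > M$; the bound on $\tilde v_{n,x}$ uses~\eqref{e:v_x} to isolate the jump of $K$ at the origin and is handled analogously.

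Now set $L := 4\pi/\sqrt{\epsilon}$ and $\alpha_n := M$, valid for $n$ large enough that $a_n > 2M + L$. Define
\[
	\phi_n(x) := E_n(x)\sin\!\left(\frac{\pi(x-\alpha_n)}{L}\right),
	\quad
	E_n(x) := \exp\!\left(-\tfrac{1}{2}\int_{\alpha_n}^x \bigl(c_n - \tau\tilde v_n(s)\bigr)\,ds\right),
\]
where $E_n$ is the integrating factor absorbing the first-order coefficient of $\tilde{\mathcal{L}}_n$. A direct computation -- essentially the symmetrization appearing in~\eqref{e.c6261} -- yields
\[
	\frac{\tilde{\mathcal{L}}_n\phi_n}{\phi_n} = \underbrace{1 - \theta_n - \tfrac{\tau\tilde v_{n,x}}{2} - \tfrac{(c_n - \tau\tilde v_n)^2}{4}}_{=: V_n(x)} - \left(\frac{\pi}{L}\right)^2.
\]
Since $c_n \leq 2-\epsilon$ gives $1 - c_n^2/4 \geq \epsilon/2$, and the localization step (combined with the uniform upper bound on $c_n$ from \Cref{l:speed_upper_bound}) makes $|V_n(x) - (1 - c_n^2/4)| \leq \epsilon/8$ once $\theta_n$ and $\delta$ are sufficiently small, while $(\pi/L)^2 = \epsilon/16$, we obtain $\tilde{\mathcal{L}}_n\phi_n \geq (\epsilon/16)\phi_n > 0$ on $[\alpha_n, \alpha_n + L]$, so $\phi_n$ is a strict positive subsolution vanishing at the endpoints.

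The proof closes with a sliding comparison. Because $u_n > 0$ on $(-a_n, a_n)$ while $\phi_n$ vanishes at $\alpha_n, \alpha_n + L$ and is positive in between, the ratio $u_n/\phi_n$ attains a positive minimum $A_*$ at some interior point $x_0 \in (\alpha_n, \alpha_n + L)$. There $u_n - A_*\phi_n \geq 0$ with equality at $x_0$, which forces
\[
	(u_n - A_*\phi_n)'(x_0) = 0
	\qquad\text{and}\qquad
	(u_n - A_*\phi_n)''(x_0) \geq 0,
\]
and therefore $\tilde{\mathcal{L}}_n(u_n - A_*\phi_n)(x_0) \geq 0$. By linearity together with $\tilde{\mathcal{L}}_n u_n \leq 0$ and the strict subsolution property of $\phi_n$,
\[
	\tilde{\mathcal{L}}_n(u_n - A_*\phi_n)(x_0) = \tilde{\mathcal{L}}_n u_n(x_0) - A_*\tilde{\mathcal{L}}_n\phi_n(x_0) \leq -\frac{\epsilon A_*}{16}\phi_n(x_0) < 0,
\]
a contradiction. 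Hence $c_n > 2-\epsilon$ for all $n$ large, proving the lemma. The main technical obstacle is the $L^1$-splitting required to make $\tilde v_n$ and $\tilde v_{n,x}$ uniformly small on $[M, a_n-M]$: without this localization $V_n$ need not be positive -- for large $|\chi|$, the term $(c_n - \tau\tilde v_n)^2/4$ can dominate $1 - c_n^2/4$ -- so the nonlocality of the chemotactic term demands this careful handling.
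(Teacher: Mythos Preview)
Your argument is the paper's in different clothing: argue by contradiction, localize $\tilde v$ and $\tilde v_x$ to be small on an interval far to the right, build a compactly supported positive subsolution of the form (decaying exponential)$\times$(oscillation), and slide it against $u$ until an interior touching point yields a contradiction. You absorb the drift with the integrating factor $E_n$ and work on $[M,M+L]$, while the paper uses $A^{-1}e^{-\lambda x}\cos(\cdot)$ on $[0,R]$ with $\lambda$ chosen to kill the $\sin$ terms; your localization---using only $u_n\le\theta_n$ on $[0,a_n]$ together with the $L^1$ tail of $K_\sigma$---is in fact cleaner than the paper's, which routes through the Harnack estimate of \Cref{l:harnack}.

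There is one genuine gap. The implication ``$c_n\le2-\epsilon$ gives $1-c_n^2/4\ge\epsilon/2$'' also needs $c_n>-\sqrt{4-2\epsilon}$, and nothing you have written supplies a lower bound on $c_n$; if $c_n$ were very negative the potential $V_n$ is large and negative and no subsolution of $\tilde{\mathcal L}_n$ exists. The paper sidesteps this: at the touching point it first uses $\underline u_x=u_x\le0$ (from \Cref{l:upper_bound_a}) together with $c<2-\epsilon$ to replace $-c\,\underline u_x$ by $-(2-\epsilon)\,\underline u_x$, so only the one-sided hypothesis is ever invoked. The same trick repairs your proof: since $u_{n,x}\le0$ on $[0,a_n]$ and $2-\epsilon-c_n>0$, the modified operator $\tilde{\mathcal L}_n' w := w_{xx}+(2-\epsilon-\tau\tilde v_n)w_x+(1-\theta_n-\tau\tilde v_{n,x})w$ satisfies $\tilde{\mathcal L}_n' u_n=\tilde{\mathcal L}_n u_n+(2-\epsilon-c_n)u_{n,x}\le0$, and building $\phi_n$ with $2-\epsilon$ in place of $c_n$ in $E_n$ gives the desired $V_n'-(\pi/L)^2\ge\epsilon/16$ without any assumption on the size of $c_n$. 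Alternatively, in the degree-theory setting of \Cref{p:slab} one has $c\in[0,2C_0]$, and then your original argument goes through verbatim.
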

\begin{proof}
	Here we simply construct a subsolution that ``pushes'' $u$ to speed $2$.  We argue by contradiction.  Suppose that $\e>0$ and $c < 2-\e$.
	
	For any $A, \lambda>0$ and $R\in (0,a-1]$ to be chosen, let
	\[
		\underline u_A(x)
			= \frac{1}{A} e^{-\lambda x} \cos\left(\frac{\pi}{2} \frac{2x - R}{R}\right)
			\qquad \text{ for } x\in \left[0,R\right].
	\]
	Since $u > 0$ on $[0, R]$, $\underline u_A < u$ on $[0,R]$  if $A$ is sufficiently large.  Let
	\[
		A_0 = \inf\left\{A>0 : \underline u_A < u \text{ on } \left[0,R\right]\right\}.
	\]
	It is clear that $A_0$ is well-defined and that, by continuity, there exists $x_{\text{\rm\tiny touch}} \in [0, R]$ such that $\underline u_{A_0}(x_{\text{\rm\tiny touch}}) = u(x_{\text{\rm\tiny touch}})$.  Since $u(0), u(R) > 0$ and $\underline u_{A_0}(0) = \underline u_{A_0}(R) = 0$, we find $x_{\text{\rm\tiny touch}} \in (0,R)$.  Finally, by construction $u - \underline u_{A_0} \geq 0$ on $[0,R]$ and has a minimum over $[0,R]$ of $0$ at $x_{\text{\rm\tiny touch}}$.

	For notational ease, we drop the $A_0$ notation and denote $\underline u_{A_0} = \underline u$.  
	Since $u - \underline u$ has a minimum of $0$ at $x_{\text{\rm\tiny touch}}$, we find
	\[
		u_x(x_{\text{\rm\tiny touch}}) = \underline u_x(x_{\text{\rm\tiny touch}})
			\qquad\text{ and }\qquad
		u_{xx}(x_{\text{\rm\tiny touch}}) \geq \underline u_{xx}(x_{\text{\rm\tiny touch}}).
	\]
	Using these along with~\eqref{e:slab}, we find, at $x_{\text{\rm\tiny touch}}$,
	\begin{equation}\label{e.c3}
	\begin{split}
		0 &\leq ( u- \underline u)_{xx}
			= - c u_x + \tau \left( \tilde v u\right)_x - u(1-u)
				- \underline u_{xx}\\
			&= - c \underline u_x + \tau \tilde v_x \underline u + \tau \tilde v \underline u_x - \underline u(1- u)
				- \underline u_{xx}.
	\end{split}
	\end{equation}
	Recall that $u$ is decreasing on $[0,a]$, and, hence, $\underline u_x(x_{\text{\rm\tiny touch}}) = u_x(x_{\text{\rm\tiny touch}}) \leq 0$.  In addition, $u(x_{\text{\rm\tiny touch}}) \leq u(0) = \theta$.

	We first estimate $\tilde v$ and $\tilde v_x$.  Fix $L>0$ to be chosen.  We claim that there is a constant $C_{\chi,\sigma}$ such that
	\begin{equation}\label{e.c51604}
		|\tilde v_x(x_{\text{\rm\tiny touch}})|, |\tilde v(x_{\text{\rm\tiny touch}})|
			\leq C_{\chi,\sigma} K_\sigma(L)  + C_{\chi,\sigma} \int_L^\infty |K_\sigma(x)|dx + C_{\chi,\sigma} \theta e^{C_{\chi,\sigma} L}.
	\end{equation}
	We show the argument only for $\tilde v$.  The argument is similar for $\tilde v_x$, though additionally using~\eqref{e:v_x}.  Indeed, decomposing the integral and then using \Cref{l:harnack} on the close-to-$x_{\rm touch}$ portion, we find
	\[
		\begin{split}
		|\tilde v(x_{\rm touch})|
			&\leq \int_{-L}^L |K_\sigma(y)| u(x_{\rm touch}-y) dy
				 + \left(\int_{-\infty}^{-L} + \int_L^\infty\right) |K_\sigma(y)|u(x_{\rm touch}-y) dy
			\\&
			\leq C_{\chi, \chi/\sigma}\int_{-L}^L |K_\sigma(y)| u(x_{\rm touch}) e^{C_{\chi, \chi/\sigma} |y|} dy
				+ \|u\|_{L^\infty}\left(\int_{-\infty}^{-L} + \int_L^\infty\right) |K_\sigma(y)| dy.
		\end{split}
	\]
	Then~\eqref{e.c51604} is obtained by using the following: $K_\sigma$ is bounded and odd, $u(x_{\rm touch}) \leq \theta$, and $\|u\|_{L^\infty}$ is bounded due to \Cref{l:upper_bound_a}.

Returning to~\eqref{e.c51604}, choosing $L$ sufficiently large and $\theta$ sufficiently small, we have $|\tilde v_x(x_{\text{\rm\tiny touch}})|, |\tilde v(x_{\text{\rm\tiny touch}})| < \e^4$.  Using this, as well as the facts that $\tau \leq 1$, $c < 2-\eps$, $\underline u_x \leq 0$, and $u\leq \theta$, we find
	\[
		0 
			\leq - \left(2-\e + \e^4\right) \underline u_x - \underline u(1- \theta - \e^4)
				- \underline u_{xx}. 
	\]

%
%
%
%
%
%
%
%

	Let $y_{\text{\rm\tiny touch}} = \pi (2x_{\text{\rm\tiny touch}} -R)/(2R)$.  Using the definition of $\underline u$ to compute $\underline u_x$ and $\underline u_{xx}$ and dividing through by $e^{-\lambda x_{\text{\rm\tiny touch}}}/A$, we find
	\[\begin{split}
		&0 
			\leq \left(2 - \e  + \e^4\right) \left( \frac{\pi}{R} \sin\left(y_{\text{\rm\tiny touch}}\right) + \lambda \cos\left(y_{\text{\rm\tiny touch}}\right)\right)
			- \cos(y_{\text{\rm\tiny touch}})(1-\theta - \e^4)\\
			&\qquad 
				- \left(- \frac{\pi^2}{R^2}\cos(y_{\text{\rm\tiny touch}}) + 2\lambda \frac{\pi}{R} \sin(y_{\text{\rm\tiny touch}})+ \lambda^2\cos(y_{\text{\rm\tiny touch}})\right)\\
			&= \cos(y_{\text{\rm\tiny touch}}) \left( \left(2 - \e + \e^4\right) \lambda  - 1 +\theta + \e^4 - \lambda^2 + \frac{\pi^2}{R^2}\right)
				+ \sin(y_{\text{\rm\tiny touch}})\left(\left(2 - \e + \e^4 - 2 \lambda\right)\frac{\pi}{R}\right).
	\end{split}\]
	To eliminate the $\sin$ term, whose sign is unknown, let $\lambda = 1 - \e/2 + \e^4/2$.  This yields
	\[
		0
			\leq \cos(y_{\text{\rm\tiny touch}}) \left( \frac{1}{4} \left(2 - \e + \e^4 \right)^2 - 1 + \theta + \e^4 + \frac{\pi^2}{R^2}\right).
	\]
	Further increasing $R$ (and, thus, $a$) and decreasing $\theta$ and $\e$, if necessary, the coefficient of $\cos$ is negative.  In addition, as $x_{\text{\rm\tiny touch}} \in (0,R)$, $y_{\text{\rm\tiny touch}} \in (-\pi/2,\pi/2)$, and, hence, $\cos(y_{\text{\rm\tiny touch}}) > 0$.  We conclude that the the right hand side above is negative, which is a contradiction.  The proof is finished.
\end{proof}

\subsubsection{Existence of a solution on the slab}

We now use Leray-Schauder degree theory to establish the following proposition.  See~\cite{Nirenberg_book} for a review of standard results in this theory that are used below.

\begin{prop}\label{p:slab}
	There exists $\theta_0>0$ such that, for any $\theta \in (0,\theta_0)$ and any $a$ sufficiently large, there exists a solution $(c,u)$ of~\eqref{e:slab} with $\tau =1$.  Further, $u \geq \theta$ on $[-a,0]$.
\end{prop}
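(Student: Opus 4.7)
The plan is to apply Leray--Schauder degree theory with $\tau \in [0,1]$ as the homotopy parameter between the classical Fisher--KPP traveling wave problem on the slab ($\tau = 0$) and the full problem~\eqref{e:slab} ($\tau = 1$).

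First I would recast~\eqref{e:slab} as a compact fixed-point equation. Fix $c_0 > 2 + |\chi|/\sigma + |\chi|/2 + 1$ and work in the Banach space $Y = \{u \in C^1([-a,a]) : u(-a) = 1,\ u(a) = 0\} \times [0, c_0]$. Given $(u,c) \in Y$, form $\tilde u$ via~\eqref{e.u_tilde}, set $\tilde v = \chi K_\sigma * \tilde u$, and solve the linear Dirichlet problem
\begin{equation*}
	-\bar u_{xx} + c \bar u_x - \tau (\tilde v \bar u)_x = u(1-u), \qquad \bar u(-a) = 1, \quad \bar u(a) = 0,
\end{equation*}
for $\bar u$; then update the speed by $\bar c = \max\{0, \min\{c_0,\, c + M(\bar u) - \theta\}\}$ with $M(\bar u) := \max_{x \geq 0}\bar u$. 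Schauder estimates and the compact embedding $C^{2,\alpha} \hookrightarrow C^1$ make the operator $\mathcal{T}_\tau : (u,c) \mapsto (\bar u, \bar c)$ compact. Any fixed point with $c$ strictly between $0$ and $c_0$ and with $M(u)=\theta$ is a solution of~\eqref{e:slab} with the correct normalization.

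The a priori bounds in \Cref{l:upper_bound_a,l:speed_upper_bound,l:speed_lower_bound}, together with interior Schauder regularity (the bounds on $\tilde v$ and $\tilde v_x$ in~\eqref{e:v_bound} being uniform in $\tau$), give a bounded open $U \subset Y$ such that every fixed point of $\mathcal{T}_\tau$ for $\tau \in [0,1]$ lies in $U$ with $c$ in the strict interior of $[0, c_0]$, ruling out spurious fixed points from the truncation. Next I would compute the degree at $\tau = 0$: here~\eqref{e:slab} reduces to the classical Fisher--KPP traveling wave problem on $[-a,a]$ with the normalization $M(u) = \theta$ selecting a unique translate, which is isolated and non-degenerate for $a$ large and $\theta$ small; this yields $\deg(\mathrm{id} - \mathcal{T}_0, U, 0) = \pm 1$. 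Homotopy invariance then produces a fixed point of $\mathcal{T}_1$, giving the desired solution of~\eqref{e:slab}.

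For the lower bound $u \geq \theta$ on $[-a,0]$, the monotonicity on $[0,a]$ from \Cref{l:upper_bound_a} and the normalization $M(u)=\theta$ force $u(0) = \theta$. Suppose for contradiction that $u(x_0) < \theta$ for some $x_0 \in (-a,0)$. Since $u(-a) = 1$ and $u(0) = \theta$, the infimum of $u$ on $[-a,0]$ is attained at some interior $y_0 \in (-a,0)$ with $u(y_0) \leq u(x_0) < \theta$. The choice of $\theta_0$ in~\eqref{e.theta_0} ensures $\theta$ is below the threshold $\overline u_{\rm mon}$ appearing in the proof of \Cref{l:monotonicity} (in both sign cases of $\chi$), so the argument there applies at $y_0$: using that $u(y) \geq u(y_0)$ for $y \in [-a,y_0]$ and $\tilde u(y) = 1 \geq u(y_0)$ for $y \leq -a$ to bound the convolution in $\tilde v_x(y_0)$, one obtains $1 - u(y_0) - \tau \tilde v_x(y_0) > 0$ (both if $\tilde v_x(y_0) \geq 0$, directly from that proof, and if $\tilde v_x(y_0) < 0$, since then $-\tau \tilde v_x(y_0) > 0$). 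The equation at the interior minimum then yields $u_{xx}(y_0) = -u(y_0)(1 - u(y_0) - \tau \tilde v_x(y_0)) < 0$, contradicting $u_{xx}(y_0) \geq 0$.

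The main obstacle will be the degree computation at $\tau = 0$: verifying that the linearization of the FKPP slab problem at its unique solution is invertible, so that the degree is genuinely nonzero, requires a careful spectral argument. Secondary technical points are checking compactness and continuity of $\mathcal{T}_\tau$ in $\tau$ given the nonlocal dependence of $\tilde v$ on $\tilde u$, and ensuring that the truncation defining $\bar c$ does not create fixed points at the boundary of $[0, c_0]$.
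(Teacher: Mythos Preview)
Your approach is essentially the same as the paper's: Leray--Schauder degree with $\tau$ as homotopy parameter, the a~priori bounds of \Cref{l:upper_bound_a,l:speed_upper_bound,l:speed_lower_bound} confining fixed points to a bounded set, and degree $\pm 1$ at $\tau=0$ from uniqueness of the FKPP slab solution. The paper's argument for $u\geq\theta$ on $[-a,0]$ is slightly slicker---it applies the monotonicity lemma directly to conclude that $u$ is decreasing on $[x_0,a]$, forcing $u(0)<\theta$---and it also sidesteps your (legitimate) worry about non-degeneracy at $\tau=0$ by simply asserting that uniqueness yields degree $\pm 1$, a standard if somewhat informal step.
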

\begin{proof}
We first point out the following bound on any solution $(c,u)$ of~\eqref{e:slab}.  By \Cref{l:upper_bound_a,l:speed_lower_bound,l:speed_upper_bound} and elliptic regularity theory, we have that, for any $\alpha \in (0,1)$, if $\theta$ is sufficiently small and $a$ sufficiently large (depending only on $\theta$), there exists a constant $C_0>0$, depending on $\chi$ and $\sigma$ but not on $\tau$, such that
\begin{equation}\label{e:a_priori_tw}
	\frac{1}{C_0}
		< c
		\quad\text{ and }\quad
	 c + \|u\|_{C^{2,\alpha}}
		\leq C_0.
\end{equation}

Let $\mathcal{B} = \{(c,u) \in [0,2C_0]\times C^{1,\alpha} : \|u\|_{C^{1,\alpha}} \leq 2C_0, u \geq 0\}$.  Consider the operator:
\[
	S_\tau : \mathcal{B} \to C^{1,\alpha} \times \R
\]
such that $S_\tau(c,u) = (c + \theta - \max_{x\geq 0} u(x), \bar u)$, where $\bar u$ is the unique solution of the linear problem
\[\begin{cases}
	-c \bar u_x + \tau ( \chi (K_\sigma * \tilde u) \bar u)_x = \bar u_{xx} + u(1- u) \quad & \text{ for all } x \in (-a,a),\\
	\bar u(-a) = 1 \quad \bar u (a) = 0
\end{cases}\]
(recall the definition of $\tilde u$ from~\eqref{e.u_tilde}).   It is important to note that $\tilde u$ is defined in terms of $u$, not $\bar u$, so that the above equation is linear in $\bar u$.

At this point we note that the existence of a solution of~\eqref{e:slab} is equivalent to the existence of a fixed point of $S_1$.  We now establish the existence of such a fixed point.

Standard elliptic regularity theory provides bounds on the $C^{2,\alpha}$ norm of $\bar u$ depending only on the $C^{1,\alpha}$ norm of $u$ and $a$.  Hence, we have that $S_\tau$ is a compact operator.  Moreover, if $a$ is sufficiently large, any fixed point of $S_\tau$ is an element of the interior of $\mathcal{B}$, by~\eqref{e:a_priori_tw}.  Hence, we have that
\[
	\deg(\id - S_1, \mathcal{B}, 0) = \deg(\id - S_0, \mathcal{B}, 0).
\]
On the other hand, any fixed point of $S_0$ is a solution to the ``slab'' problem for the Fisher-KPP equation.  The uniqueness of such solutions is well-known (and easy to establish).  Hence, $\deg(\id - S_0, \mathcal{B}, 0) = 1$ or $-1$.  In either case, we see that $\deg(\id - S_1, \mathcal{B}, 0) \neq 0$, which establishes the existence of a fixed point.

The last step is to establish the lower bound of $u$ on $[-a,0]$.  To this end, if $u(x_0) < \theta$ for some $x_0 \in [-a,0]$, then we may apply \Cref{l:monotonicity} (which can clearly be extended to the slab problem) to conclude that $u$ is decreasing on $[x_0,a]$.  It follows that $u(0) \leq u(x_0) < \theta$, which contradicts the fact that, by construction, $u(0) = \theta$.  We conclude that $u\geq \theta$ on $[0,a]$, as claimed.  Thus, the proof is complete.
\end{proof}

\subsection{Taking $a\to\infty$: the proof of \Cref{thm:existence_and_speed_lower}}

\begin{prop}\label{prop:tw}
There exists a traveling wave solution $(c,u)$, in the sense of \Cref{def:wave} with
\[
	c
		\leq 2 \sqrt{ 1 + \frac{|\chi|}{\sigma}} + \frac{|\chi|}{2}
		\leq 2 + \frac{|\chi|}{\sigma} + \frac{|\chi|}{2}.
\]
Furthermore, $u(-\infty) = 1$.
\end{prop}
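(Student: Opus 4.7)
The strategy is to pass to the limit $a \to \infty$ in the slab solutions from \Cref{p:slab}. Fix $\theta$ small as in that proposition, let $a_n \to \infty$, and denote the corresponding solutions by $(c^{a_n}, u^{a_n})$ with $\tau = 1$. By the a priori estimate~\eqref{e:a_priori_tw}, the upper bound from \Cref{l:speed_upper_bound}, the lower bound from \Cref{l:speed_lower_bound}, and interior Schauder theory, the sequence $u^{a_n}$ is uniformly bounded in $C^{2,\alpha}_{\text{loc}}(\R)$ and, for $n$ large, $c^{a_n}$ lies in a compact subset of $(0, 2\sqrt{1 + |\chi|/\sigma} + |\chi|/2]$. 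A standard diagonal Arzela--Ascoli argument extracts a subsequence (not relabeled) along which $c^{a_n} \to c^*$ and $u^{a_n} \to u^*$ in $C^2_{\text{loc}}(\R)$, with $c^* \geq 2$ satisfying the asserted upper bound.

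Next, I would pass to the limit in the equation. The local terms go through directly. For the nonlocal term, observe that for every fixed $y$, eventually $y \in (-a_n, a_n)$, so $\tilde u^{a_n}(y) = u^{a_n}(y) \to u^*(y)$ pointwise on $\R$; since $\{\tilde u^{a_n}\}$ is uniformly bounded and $K_\sigma \in L^1(\R)$, dominated convergence gives $\tilde v^{a_n}(x) \to v^*(x) := \chi (K_\sigma * u^*)(x)$ at every $x$, and an analogous argument based on~\eqref{e:v_x} handles $\tilde v^{a_n}_x$. Hence $(c^*, u^*)$ satisfies~\eqref{e:wave} on $\R$ with $u^* \in C^2(\R) \cap L^\infty(\R)$. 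Moreover, \Cref{p:slab} gives $u^{a_n} \geq \theta$ on $[-a_n, 0]$, so $u^* \geq \theta$ on $(-\infty, 0]$; in particular $u^*$ is not identically zero, and the strong maximum principle then yields $u^* > 0$ throughout $\R$.

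To close the argument, I need the limits at $\pm\infty$. By \Cref{l:upper_bound_a}, each $u^{a_n}$ is nonincreasing on $[0, a_n]$ with $u^{a_n} \leq \theta$ there, so $u^*$ is nonincreasing on $[0, \infty)$ and $L := \lim_{x \to \infty} u^*(x) \in [0, \theta]$ exists. To show $L = 0$, I would consider the translations $w_k(z) := u^*(z + x_k)$ with $x_k \to \infty$: by $C^2_{\text{loc}}$ compactness one extracts a subsequential limit $w_k \to L$ in $C^2_{\text{loc}}$, while dominated convergence together with the oddness of $K_\sigma$ (so $\int K_\sigma = 0$) gives $v^*(\cdot + x_k) \to \chi L \int K_\sigma = 0$ and $v^*_x(\cdot + x_k) \to 0$ locally uniformly. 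Passing to the limit in~\eqref{e:wave} along $x_k$ yields $L(1 - L) = 0$, and $L \leq \theta < 1$ forces $L = 0$. Since $c^* \geq 2 > 0$ and $u^*(+\infty) = 0$, \Cref{l:one} then delivers $u^*(-\infty) = 1$, completing the proof.

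The main delicate point is the passage to the limit of the nonlocal term and, in particular, ruling out $L \in (0, \theta]$ at $+\infty$: because the advection $v^*$ depends on the global values of $u^*$, one must analyze the full equation along translates rather than relying solely on the ODE limit, but the oddness of $K$ conveniently kills the leading-order contribution. Everything else is a routine compactness argument.
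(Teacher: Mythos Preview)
Your proposal is correct and follows essentially the same route as the paper: extract a convergent subsequence of the slab solutions via the uniform $C^{2,\alpha}$ bounds, pass to the limit in the equation (including the nonlocal term by dominated convergence), identify $u^*(+\infty)=0$ by the translation-and-limit argument yielding $L(1-L)=0$, and finally invoke \Cref{l:one} for the left limit. One minor remark: the claim $c^*\geq 2$ is not directly delivered by \Cref{l:speed_lower_bound} for a \emph{fixed} $\theta$ (it only gives $c^*\geq 2-\epsilon$ for those $\epsilon$ with $\theta\leq\theta_\epsilon$), but this is harmless since you only need $c^*>0$ to apply \Cref{l:one}, which already follows from~\eqref{e:a_priori_tw}.
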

\begin{proof}
For clarity, we denote the solution constructed in~\eqref{p:slab} as $(c^a, u^a)$.  The bounds in~\eqref{e:a_priori_tw} are uniform in $a$.  Hence, for any $\theta \in (0,\theta_0)$, there exists a sequence $a_n \to \infty$ and $(c,u) \in [0, 2 + |\chi|/2 + |\chi|/\sigma]\times C^{2,\alpha}$ such that, as $n \to \infty$, $c^{a_n} \to c$ and $u^{a_n} \to u$.  Using~\eqref{e:slab}, it is easy to check that
\begin{equation}\label{e.c4}
	- c u_x + (v u)_x = u_{xx} + u(1-u) \qquad \text{ for all } x\in \R,
\end{equation}
where $v$ is as in~\eqref{e:advection}.

Next, we establish the behavior of $u$ as $x\to\infty$.  By \Cref{l:monotonicity}, let $\delta = \lim_{x\to\infty} u(x)$. Let $u_n(x) = u(x + n)$.  Since $u_n$ satisfies~\eqref{e:a_priori_tw}, we may take a subsequence $n_k \to \infty$ such that $u_{n_k} \to \delta$ locally uniformly in $C^2$.  Using this, as well as the fact that $u_n$ satisfies~\eqref{e.c4} with $v_n(x) = v(x+n)$, we find
\[
	0
		= \lim_{n\to\infty} \left(- c (u_n)_x + (v_n u_n)_x - (u_n)_{xx} - u_n(1-u_n)\right)
		= \delta(1-\delta).
\]
Note that we used that, since $u_n \to \delta$ locally uniformly then $v_n \to 0$ locally uniformly.  As $\delta < \theta < 1$, it follows that $\delta = 0$.  Thus $\lim_{x\to\infty} u(x) = 0.$

It follows immediately from \Cref{l:monotonicity} and the fact that $u(0) = \theta$ that $\liminf_{x\to-\infty} u(x) \geq \theta > 0$.  The fact that this limit is equal to one is due to \Cref{l:one}.  This concludes the proof.
\end{proof}

In addition, we can show that all traveling waves, including the one constructed in \Cref{prop:tw} have speed at least 2.  One could prove this in the manner of \cite[Proposition~4.1]{NadinPerthameRyzhik} or by changing variables to the time-dependent problem and applying \cite[Theorem~1.1]{HamelHenderson}; however, we provide another proof here in the interest of giving another perspective.

\begin{proposition}\label{prop:c_bigger_than_2}
	Suppose that $u$ is a traveling wave solution of~\eqref{e:wave} as in \Cref{def:wave}.  Then $c\geq 2$.
\end{proposition}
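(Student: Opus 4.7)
The plan is to argue by contradiction: assume $c < 2$, reduce \eqref{e:wave} to a second-order ODE in canonical form $w_{xx} + Q(x) w = 0$ via a Liouville-type substitution that eliminates the first-order term, show that the potential $Q(x)$ is eventually bounded below by a strictly positive constant, and then invoke Sturm's oscillation theorem to force $w$ (and hence $u$) to change sign on the right, contradicting the positivity of $u$.

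First, I would rewrite \eqref{e:wave} in non-divergence form as
\[
u_{xx} + (c - v)\,u_x + (1 - u - v_x)\,u = 0,
\]
and apply the substitution $u(x) = e^{-\Phi(x)} w(x)$ with $\Phi'(x) = (c - v(x))/2$. A direct computation yields
\[
w_{xx} + Q(x)\, w = 0, \qquad Q(x) := 1 - u(x) - \frac{v_x(x)}{2} - \frac{(c - v(x))^2}{4}.
\]
Since $u > 0$ everywhere, $w > 0$ everywhere. The key observation is that $Q(x) \to 1 - c^2/4$ as $x \to +\infty$, which is strictly positive under the assumption $c < 2$.

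Second, to establish this limit it suffices to check that $u(x), v(x), v_x(x) \to 0$ as $x \to +\infty$. The first is part of \Cref{def:wave}. For $v(x) = \chi \int K_\sigma(y)\, u(x-y)\,dy$, dominated convergence applies since $u(x-y)$ is bounded by $\|u\|_{L^\infty}$ and tends to zero pointwise in $y$, while $K_\sigma \in L^1$. For $v_x$, I would use the explicit identity \eqref{e:v_x}: the boundary term $-\chi u(x)/\sigma$ vanishes as $u \to 0$, and the integral against $(K_\sigma)_y$ (understood as a finite Radon measure, using the bounded variation of $K$ on each half-line) tends to zero by dominated convergence once one knows $u(x \pm y) \to 0$ for each $y > 0$, which follows from \Cref{l:monotonicity} and the limit $u(+\infty) = 0$.

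Third, fixing $R$ large enough and $Q_0 > 0$ so that $Q(x) \geq Q_0$ on $[R, \infty)$, Sturm's comparison theorem applied against the constant-coefficient equation $y'' + Q_0 y = 0$, whose solutions have consecutive zeros at distance $\pi/\sqrt{Q_0}$, forces $w$ to have infinitely many zeros in $[R, \infty)$, contradicting positivity and yielding $c \geq 2$. The main technical obstacle is the verification that $v_x(x) \to 0$ at $+\infty$ with only the bounded-variation structure of $K$; however, the monotonicity lemma together with the integrability of $K$ handles this directly, and everything else in the argument reduces to routine computation once the Liouville substitution is in place.
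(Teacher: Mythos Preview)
Your argument is correct. Both proofs rest on the same underlying fact---that near $+\infty$ the equation linearizes to $u_{xx} + c u_x + u \approx 0$, which forces oscillation when $c < 2$---but they implement it differently. The paper constructs a compactly supported subsolution of the form $A^{-1} e^{-\lambda x}\cos\bigl(\pi(2x-R)/(2R)\bigr)$ on an interval $[L,L+R]$ pushed far to the right, then runs a touching argument; this mirrors the proof of \Cref{l:speed_lower_bound} for the slab problem and fits naturally with the maximum-principle machinery used throughout the paper. Your Liouville substitution followed by Sturm comparison is the more classical ODE route and is arguably cleaner: it avoids the explicit construction and the sliding-parameter argument, and it makes transparent \emph{why} positivity fails (the potential $Q$ is eventually bounded below by $1-c^2/4>0$). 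The price is that your approach is less portable to the finite-slab setting of \Cref{l:speed_lower_bound}, where the paper needs a compactly supported subsolution anyway; the paper's choice lets it reuse the same proof verbatim for both situations. One minor remark: your invocation of \Cref{l:monotonicity} in the verification that $v_x(x)\to 0$ is not actually needed---pointwise convergence $u(x\pm y)\to 0$ for fixed $y$ follows directly from $u(+\infty)=0$, and dominated convergence against the finite measure $(K_\sigma)_y$ on $(0,\infty)$ (total mass $1/(2\sigma)$, since $K$ is monotone there) does the rest.
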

\begin{proof}
	The proof of this claim follows exactly as in the \Cref{l:speed_lower_bound} with the following modification.  Instead of constructing the subsolution $\underline u$ on $[0,R]$, one constructs the subsolution shifted by $L$ to be on $[L,L+R]$ for $L\gg 1$.  Then all $\theta$ in the proof, which are used to bound $u(x_{\text{\rm\tiny touch}})$, become $\theta_L = \min_{[L,\infty)} u$.  The proof follows since $\lim_{L\to\infty} \theta_L = 0$ by~\eqref{e:wave_limits}.
\end{proof}

Clearly the combination of \Cref{prop:tw} and \Cref{prop:c_bigger_than_2} yield \Cref{thm:existence_and_speed_lower}.

\subsection{Minimal speed traveling waves with speed $2$}\label{s:slow_tw}



For notational ease, we fix a sequence $a_n \to \infty$ along which $(c_n, u_n) = (c^{a_n}, u^{a_n})$ converges to $(c_U, U)$, where we use $(c_U, U)$ to denote the traveling wave constructed in the previous section.  Since we mainly work with $u_n$ in this section, we suppress the dependence on $n$; however, $u$ always refers to the approximate traveling wave on the slab $[-a_n, a_n]$ and $U$ refers to its locally uniform limit as $n\to\infty$.  We suppress the dependence on $\chi$ and $\sigma$ when no confusion will arise; otherwise we denote such dependence as a super-script, e.g. $c^{\chi,\sigma}$.

We now improve the bounds of \Cref{l:speed_upper_bound} to show that $c \leq 2$ for all $a$ sufficiently large, from which we conclude that $c_U =2$ (see \Cref{thm:existence_and_speed_lower}).   It is clear that \Cref{thm:slow} follows directly from the \Cref{p:slow} (below) after taking $a\to\infty$.  We state and prove \Cref{p:slow} here, postponing the proof of the main technical lemma until \Cref{sec:non_negative_eigenvalue}.
\begin{prop}\label{p:slow}
	Suppose $\theta < \theta_0$.  There exists $\delta_\theta>0$, depending only on $\theta$ and $K$ such that if $|\chi|(\sigma^2+\sigma^{-1}) \in (0,\delta_\theta)$ and $a> 1/\delta_\theta$, then
	\[
		c \leq 2.
	\]
\end{prop}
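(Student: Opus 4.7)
The strategy is by contradiction: assume $c > 2$ on the slab for arbitrarily large $a$ and derive $u(0) \to 0$ as $a \to \infty$, contradicting the normalization $u(0) = \theta > 0$. The goal is to construct a supersolution of the linearized equation whose value at $0$ decays exponentially in $a$.

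I would begin by seeking a supersolution $\bar u$ of $-c\bar u_x + (v\bar u)_x \geq \bar u_{xx} + \bar u(1-u)$ on $[-a,a]$, treating $v$ and $u$ as given coefficients. Writing $\bar u = e^{\alpha}\phi$ with $\alpha_x = -(c-v)/2$ eliminates the first-order term and reduces the inequality to $\phi_{xx} + V\phi \leq 0$ with $V$ as in~\eqref{e.c6261}. This is the crucial structural step: it transforms a non-self-adjoint problem into a self-adjoint one. I would then consider the periodic principal eigenvalue problem $-\phi^a_{xx} - V\phi^a = \lambda\phi^a$ on $[-a,a]$, which has a positive eigenfunction $\phi^a$ and whose eigenvalue $\lambda$ is characterized by the Rayleigh quotient
\[
\lambda = \inf_{\psi \not\equiv 0}\frac{\int (\psi_x)^2 - \int V\psi^2}{\int \psi^2}.
\]

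The heart of the argument is proving $\lambda \geq 0$ when $c > 2$ and $|\chi|(\sigma^2 + \sigma^{-1})$ is sufficiently small. Heuristically, $V$ is strongly negative on the left where $u \approx 1$ dominates, while the only potentially positive region is on the right, where the advection contributions $v(2c-v)/4 \pm v_x/2$ are $O(|\chi|)$ by \Cref{l:convolution_bounds}. A putative minimizer of the Rayleigh quotient would prefer to concentrate where $V > 0$, but periodicity forces it to carry significant mass where $V$ is very negative, paying a Dirichlet-energy cost for bridging the two regions. I would formalize this by establishing a pair of Poincar\'e-type inequalities (of the form~\eqref{e:poincare1}--\eqref{e:poincare2}) which, combined with the smallness of $|\chi|(\sigma^2 + \sigma^{-1})$, yield $\int(\phi^a_x)^2 - \int V (\phi^a)^2 \geq 0$; multiplying the eigenvalue equation by $\phi^a$ and integrating then gives $\lambda \geq 0$. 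This is where the full technical weight of the argument lies, and it is the main obstacle; I would isolate it as a standalone lemma.

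Once $\lambda \geq 0$ is established, $\bar u := M e^\alpha \phi^a$ is a supersolution of the linear equation $\mathcal{L}\bar u = 0$, and the quotient $\rho := u/\bar u$ satisfies a second-order linear ODE whose zeroth-order coefficient has sign $-\lambda \leq 0$; the classical maximum principle then yields $u \leq \bar u$ on $[-a,a]$ as soon as $\bar u \geq u$ at $x = \pm a$. The remaining task is to control $\phi^a$ pointwise uniformly in $a$; I would do this by constructing comparison barriers and bootstrapping the Schauder-type regularity for the eigenvalue equation, whose potential $V$ is bounded in $L^\infty$ independently of $a$. Choosing $M$ minimally subject to $\bar u(-a) \geq 1$ and using the explicit form of $\alpha$ gives $\bar u(0) \lesssim e^{-(c-v)a/2}\phi^a(0)/\phi^a(-a)$, which tends to zero exponentially in $a$ when $c > 2$, yielding the desired contradiction with $u(0) = \theta > 0$ once $a$ is taken large enough. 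The smallness assumption $|\chi|(\sigma^2 + \sigma^{-1}) < \delta_\theta$ enters exclusively in step 2, where the positive contribution of $V$ must be absorbed by its negative part through carefully chosen Poincar\'e weights.
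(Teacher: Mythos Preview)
Your proposal is correct and follows essentially the same approach as the paper: the integrating-factor transformation to the self-adjoint form, the periodic eigenvalue problem, establishing $\lambda \geq 0$ via the Poincar\'e-type inequalities~\eqref{e:poincare1}--\eqref{e:poincare2}, the barrier construction to bound $\phi^a(0)$, and the sliding/maximum-principle comparison yielding exponential smallness of $w(0)$ are all exactly the paper's steps. One minor correction: the smallness of $|\chi|(\sigma^2+\sigma^{-1})$ is not used exclusively in the $\lambda\geq 0$ step but also in obtaining the pointwise bound $\phi^a(0)\leq e^{a/2}$ (\Cref{l:eigenvalue}) and in controlling $\int_0^{-a}\tilde v$ in the final estimate, so that the contradiction is reached uniformly for all $c\geq 2$ once $a$ is large, not merely when $c>2$ by a fixed margin.
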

\begin{proof}
We proceed by contradiction, writing
\begin{equation}\label{e.c52102}
	c = 2 + \epsilon
		\quad\text{ for some }
		\e\in(0, 2 |\chi|(1 + \sigma^{-1})],
\end{equation}
where the upper bound follows from \Cref{l:speed_upper_bound}.  Define $w:[-a,a] \to \R$ by
\[
	u(x) = \exp\left\{-\frac{c}{2} x + \frac{1}{2}\int_0^x \tilde v(y)\, dy\right\} w(x).
\]
Let $V(x) = 1- (u + (c-\tilde v)^2/4 - \tilde v_x/2)$.  From~\eqref{e:slab}, we find
\begin{equation}\label{e:tw4}
	\begin{cases}
		-w_{xx} - w V =0 \qquad \text{ on } (-a,a),\\
		w(-a) = \exp\left\{-\frac{1}{2} ca - \frac{1}{2} \int_0^{-a} \tilde v(y)\, dy\right\},
			\quad w(0) = \theta,
			\quad w(a) = 0.
	\end{cases}
\end{equation}

At this point it is useful to notice a few key properties of $V$.  First,
\begin{equation}\label{e:tw5}
	V = - u - \e \left(1 + \frac{\e}{4}\right) + \tilde v\left(\frac{c}{2} - \frac{\tilde v}{4}\right) + \frac{\tilde v_x}{2}.
\end{equation}
The important thing is that, when $|\chi|(1+\sigma^{-1})$ is small enough,  $V< 0$ except (perhaps) far to the right where it is small and decays to zero.  This heuristic should imply that the principal eigenvalue $\lambda$ of $-\partial_{xx} - V$ should be positive.  The corresponding principal eigenfunction is then a supersolution of~\eqref{e:tw4}, which we use to derive a contradiction.

We now make this argument precise.  Let $(\lambda,\phi)$ solve
\begin{equation}\label{e:tw_eigenvalue}
	\begin{cases}
		-\phi_{xx} - \phi V = \lambda \phi \qquad \text{ in } (-a,a),\\
		1 = \phi(-a) = \phi(a), \quad \phi>0.
	\end{cases}
\end{equation}
One way to construct a solution of~\eqref{e:tw_eigenvalue} is via the Rayleigh quotient representation of $\lambda$; that is,
	\begin{equation}\label{e:Rayleigh}
		\lambda = \min_{\psi \in H_{\rm per}^1([-a,a])} \frac{\int_{-a}^a (\psi_x^2 - V \psi^2)\, dx}{\int_{-a}^a \psi^2 dx},
	\end{equation}
	where $H^1_{\rm per}([-a,a])$ is the closure of all $2a$ periodic $C^1$ functions under the $H^1([-a,a])$ norm.  The existence and positivity of a minimizer and the fact that a minimizer is, up to normalization, a solution of~\eqref{e:tw_eigenvalue} is classical.

We postpone the analysis of~\eqref{e:tw_eigenvalue} momentarily, though we record the important properties of $(\lambda,\phi)$ in~\Cref{l:eigenvalue}, below, which is proved in \Cref{sec:non_negative_eigenvalue}.
\begin{lem}\label{l:eigenvalue}
	For $|\chi|(\sigma^2 + \sigma^{-1})$ and $\theta_0$ small enough, $\lambda \geq 0$ and $\phi(0) \leq \exp\{ a/ 2\}$ for all $a$ sufficiently large when $c = 2+\eps$ with $\eps\geq0$.
\end{lem}

We now use \Cref{l:eigenvalue} to conclude.  Let $\phi_A(x) = A \phi(x)$.  If $A$ is sufficiently large, then $\phi_A > w$ because $w$ is bounded and $\phi$ is uniformly positive.  Let $A_0 = \inf\{A>0 : \phi_A >w\}$.  Then we find $x_0 \in [-a,a]$ such that $\phi_{A_0}(x_0) = w(x_0)$.  Since $w > 0$ on $[-a,a)$ and $\phi >0$, it follows that $A_0 > 0$.  
There are three cases.

{\bf Case one:  $x_0 = a$.}  This, clearly, cannot occur since $\phi_{A_0}(a) = A_0 \phi(a) > 0 =w(a)$.

{\bf Case two:  $x_0 \in (-a,a)$.}  First, note that, due to \Cref{l:eigenvalue},
\[
	-\phi_{xx}
		- V \phi
		= \lambda \phi
		\geq 0.
\]
Hence $\phi_{A_0}$ is a supersolution of~\eqref{e:tw_eigenvalue}.  Recall that $\phi_{A_0} > w(a)$ and $\phi_{A_0} \geq w$ on $[-a,a)$, by construction.  Applying the strong maximum principle, we conclude that $\phi_{A_0} > w$ on $(-a,a)$, which contradicts the fact that $x_0 \in (-a,a)$. Thus this case cannot occur.

{\bf Case three:  $x_0 = -a$.}  Then we have that
\[
	A_0 = \phi_{A_0}(-a) = w(-a) = \exp\left\{-\frac{1}{2}\left(ca - \int_0^{-a} \tilde v(y)\, dy\right)\right\}.
\]
In addition, using this identity for $A_0$, we find
\be\label{e.c6242}
	\theta
		= w(0)
		\leq \phi_{A_0}(0)
		\leq A_0 \phi(0)
		\leq \exp\left\{-\frac{1}{2}\left(ca - \int_0^{-a} \tilde v(y)\, dy\right)\right\} \exp\left\{\frac{a}{2}\right\}.
\ee
By assumption $c\geq 2$.  In addition, if $|\chi|(1+\sigma^{-1})$ is sufficiently small, then, by~\eqref{e:v_bound},
\[
	\frac{1}{2} \int_0^{-a} |\tilde v(y)|\, dy
		\leq \frac{a}{4}.
\]
Hence, the right hand side of~\eqref{e.c6242} clearly tends to zero as $a \to \infty$. 
This contradicts the fact that $\theta$ is positive and independent of $a$.

We have reached a contradiction in all cases, which implies that $c \leq 2$.  This concludes the proof.
\end{proof}

\subsubsection{Nonnegativity of the eigenvalue and asymptotics of its eigenfunction}\label{sec:non_negative_eigenvalue}

The first step is establishing the exponential decay of $u$ for all $a$ sufficiently large and all $\chi$ and $\sigma$ such that $|\chi|(1+\sigma^{-1})$ is sufficiently small.

\begin{lem}[Exponential decay of $u$]\label{l:exponential_decay}
	There exists $\mu, \delta_\theta>0$ such that if $|\chi|(1+\sigma^{-1}) \leq \delta_\theta$ and $a > 1/\delta_\theta$ and $\theta <1/4$, then
	\[
		u(x) \leq \theta e^{-\mu x} \qquad \text{ for all } x\geq 0.
	\]
	The constant $\delta_\theta$ depends on $\theta$.
\end{lem}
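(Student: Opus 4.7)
The plan is to prove this by a touching/sliding argument with an exponential barrier, closely patterned on the proof of \Cref{l:speed_upper_bound}. Fix $\mu > 0$ (e.g.\ $\mu = 1$, motivated by the linearization of the FKPP equation at $u = 0$ with critical speed $c = 2$) and consider the family $\bar u_A(x) := A e^{-\mu x}$ on $[0,a]$. Set
\[
A_0 = \inf\{A > 0 : \bar u_A(x) \geq u(x) \text{ for all } x \in [0,a]\};
\]
since $u(0) = \theta$ and $\bar u_A(0) = A$, one has $A_0 \geq \theta$, so the conclusion of the lemma is equivalent to showing $A_0 = \theta$. By continuity there exists a touching point $x_0 \in [0,a]$. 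The case $x_0 = a$ is impossible because $\bar u_{A_0}(a) > 0 = u(a)$, and the case $x_0 = 0$ yields $A_0 = \theta$ directly. The heart of the argument is therefore ruling out $x_0 \in (0,a)$.

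Assume $x_0 \in (0,a)$ for contradiction. The tangency conditions $\bar u_{A_0,x}(x_0) = u_x(x_0) = -\mu u(x_0)$ and $\bar u_{A_0,xx}(x_0) \geq u_{xx}(x_0)$, combined with the slab PDE in~\eqref{e:slab}, produce the key inequality
\[
u(x_0) \leq \mu^2 - \mu(c - \tilde v(x_0)) + 1 - \tilde v_x(x_0).
\]
For $\mu = 1$ this becomes $u(x_0) \leq (2-c) + \tilde v(x_0) - \tilde v_x(x_0)$. In the regime in which \Cref{l:exponential_decay} is applied (via \Cref{l:eigenvalue}) we have $c \geq 2$, and the slab analog of \Cref{l:convolution_bounds} gives $|\tilde v|, |\tilde v_x| \leq C|\chi|(1+\sigma^{-1}) \leq C\delta_\theta$, so the right-hand side is at most $C\delta_\theta$. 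On the other hand, $u(x_0) = A_0 e^{-\mu x_0} \geq \theta e^{-\mu x_0}$ because $A_0 \geq \theta$; combining the two estimates forces $\theta e^{-\mu x_0} \leq C\delta_\theta$, i.e., $x_0$ must lie in the tail, beyond $X_\theta := \mu^{-1}\log(\theta/(C\delta_\theta))$.

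To close the argument one must still rule out touching on the tail $[X_\theta,a]$. The idea is to iterate the same touching argument on progressively shifted sub-intervals $[x_k,a]$, using the already-established tail bound $u \leq C\delta_\theta$ together with the near-linearity of the slab equation in the small-$u$ regime and the Harnack-type propagation in \Cref{l:harnack} to upgrade the bound to a genuine exponential decay with an $a$-independent constant in front. The size assumption $a > 1/\delta_\theta$ is precisely what guarantees that finitely many iterations suffice without the constants degrading.

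The principal obstacle is the critical regime $c \approx 2$, where the two characteristic roots $\lambda_\pm^{(c)} = (c \pm \sqrt{c^2-4})/2$ of the linearization coalesce and the admissible range of $\mu$ in the touching inequality degenerates. In this regime the perturbation term $\tilde v - \tilde v_x$ is of the same order as the principal term $2-c$, and one cannot extract a negative right-hand side in the touching inequality for free. The smallness $|\chi|(1+\sigma^{-1}) \leq \delta_\theta$ in the hypothesis is tuned precisely so that the iteration on the tail converges, the perturbation is strictly dominated, and a fixed positive $\mu$ (independent of $\theta$, with the $\theta$-dependence absorbed entirely into $\delta_\theta$) can be chosen.
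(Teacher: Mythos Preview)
Your proposal has a genuine gap that the proposed iteration does not close. The touching inequality you derive is correct: at an interior touching point $x_0$, with $\mu=1$ and $c\geq 2$, one gets $u(x_0)\leq \tilde v(x_0)-\tilde v_x(x_0)\leq C\delta_\theta$. But this is not a contradiction; it only localizes the touching point to the tail. Restarting the barrier on $[x_0,a]$ produces the \emph{same} bound $u(x_1)\leq C\delta_\theta$ at the next touching point, with no improvement, so the iteration does not terminate in the way you suggest. The obstruction is exactly the critical case $c=2$ that you flag: the characteristic polynomial $\lambda^2-c\lambda+1$ has a double root at $\lambda=1$, so for \emph{every} choice of $\mu$ the leading term $\mu^2-c\mu+1=(\mu-1)^2$ is nonnegative, and the perturbative contribution $\mu\tilde v-\tilde v_x$ can never be beaten by making it small. \Cref{l:harnack} is a growth estimate, not a decay estimate, and does not help here. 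The assumption $a>1/\delta_\theta$ is not what makes an iteration converge; it is used for a different purpose entirely (see below).

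The paper circumvents this by working with the logarithmic derivative $\rho=u_x/u$ rather than with $u$. Step~1 shows that $\rho$ cannot have an interior maximum $M\geq-\mu$: at such a point one gets a quadratic for $M$ whose larger root, in the limit of small perturbation, is $-1+\sqrt{u(x_0)}\leq -1+\sqrt\theta<-1/2$, using crucially that $u\leq\theta<1/4$ on $[0,a]$. (Your touching computation is morally the same calculation, but you do not exploit the bound $u(x_0)\leq\theta$.) The decisive new ingredient is Step~2: the boundary value $\rho(0)$ is controlled by a \emph{compactness argument}---along sequences $a_n\to\infty$, $|\chi_n|(1+\sigma_n^{-1})\to0$, the pair $(c,u)$ converges to the minimal-speed FKPP wave $(2,U)$, and one then invokes the known strict negativity of $\overline U_x/\overline U$ on $[0,\infty)$ to fix $\mu=-\tfrac14\sup_{[0,\infty)}\overline U_x/\overline U>0$. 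This compactness step is the reason $\delta_\theta$ depends on $\theta$ and why $a>1/\delta_\theta$ is assumed; it is the idea your proposal is missing.
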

\begin{proof}
	Let $\rho = u_x / u$, and fix $\mu\in(0, 1/4]$ a constant to be chosen.  We prove this lemma in two steps.  First, we show that $\rho$ cannot have a maximum greater than $-\mu$ on $(0,a)$.  Since $\rho(a) = -\infty$, then $\rho(x) \leq \max\{-\mu, \rho(0)\}$.  Second, we show that $\rho(0) \leq -\mu$.  We conclude that $\rho(x) \leq -\mu$, which finishes the proof.
	
	{\bf Step one: $\rho$ cannot have an interior max larger than $-\mu$.}  Suppose that $x_0$ is the location of an interior maximum $M \geq -\mu$.  Using that $u \rho = u_x$ and $u\rho_x + u \rho^2 = u_{xx}$ in~\eqref{e:slab} (with $\tau =1$) and that $\rho = M$ and $\rho_x = 0$ at $x_0$, we obtain
	\[
		0
			= (c-\tilde v)u_x + u_{xx} + u(1-u-\tilde v_x)
			= u\left((c-\tilde v)M + M^2 + (1-u-\tilde v_x)\right).
	\]
	Hence,
	\be\label{e.c6241}
		M
			= -\frac{c - \tilde v \pm \sqrt{ (c-\tilde v)^2 - 4(1-u-\tilde v_x)}}{2}
			\leq \frac{-\left(c - \tilde v\right) + \sqrt{ (c-\tilde v)^2 - 4(1-u-\tilde v_x)}}{2}
				.
	\ee
	Recall from \Cref{thm:existence_and_speed_lower} that
	\be\label{e.c51601}
		2 \leq c \leq 2 + \frac{|\chi|}{2} + \frac{|\chi|}{\sigma}.
	\ee
	Hence, taking 
	$|\chi|(1+\sigma^{-1})$ to zero, the right hand side above tends to 
	\[
		\frac{-2 + \sqrt{4 - 4(1-u)}}{2}
			= -1+\sqrt{u(x_0)}
			\leq -1 + \sqrt{\theta}.
	\]
	As $\theta < 1/4$ by assumption, this yields $M < -1/2 < -1/4 \leq - \mu$.  Hence, if $|\chi|(1+\sigma^{-1})$ is sufficiently small,~\eqref{e.c6241} cannot hold.  This contradiction concludes the proof of this step.

	{\bf Step two: $\rho(0) \leq -\mu$.} 
	We argue by contradiction: if the claim were not true for all $a$ sufficiently large and $|\chi|(1+\sigma^{-1})$ sufficiently small, then there would exist sequences $a_n$, $\chi_n$, and $\sigma_n$, such that $a_n\to\infty$ and $|\chi_n|(1+\sigma_n^{-1}) \to 0$ as $n\to\infty$ such that $\rho^{\chi_n,\sigma_n}(0) > -\mu$.
	
	Recall~\eqref{e.c51601}. By compactness, $(c^{\chi_n,\sigma_n},u^{\chi_n,\sigma_n})$ converges to the minimal speed traveling wave solution $(2,U)$ of the Fisher-KPP equation
	\[
		-2 U_x = U_{xx} + U(1-U),
	\]
	with the normalization $U(0) = \theta$, and $\rho^{\chi_n,\sigma_n}(0)$ converges to $U_x(0)/U(0)$.
	
	Recall, from, e.g.,~\cite[Theorem~1.3]{Hamel}, the asymptotics of the Fisher-KPP equation: if $\overline U$ is the solution to the Fisher-KPP equation with the normalization $\overline U(0) = 1/2$, then there exists $\kappa$ such that $\overline U(x) / (\kappa x e^{-x}) \to 1$ as $x\to\infty$.  By the uniqueness of traveling wave solutions for the Fisher-KPP equation, there exists $x_\theta$, tending to $\infty$ as $\theta$ tends to zero, such that $U(x) = \overline U(x+x_\theta)$.  Further, by the monotonicity of the traveling wave for the Fisher-KPP equation, $x_\theta$ increases as $\theta$ decreases.  Hence,
	\[
		\sup_{[0,\infty)} \frac{U_x}{U}
			= \sup_{[x_\theta,\infty)} \frac{\overline U_x}{\overline U}
			\leq \sup_{[0,\infty)} \frac{\overline U_x}{\overline U}.
	\]
	The first inequality follows since $\theta < \theta_0 < 1/2$.
	
	Thus, let
	\[
		\mu =  - \frac{1}{4}\sup_{[0,\infty)} \frac{\overline U_x}{\overline U}.
	\]
	First, notice that $\overline U_x / \overline U \to -1$ as $x\to\infty$, so $\mu \leq 1/4$, which is consistent with our choice in the previous step.  On the other hand, since $\overline U_x/\overline U$ is a continuous, negative function on $[0,\infty)$ such that $\overline U_x(0) < 0$ and $\overline U_x / \overline U \to -1$ as $x\to\infty$, we find that $\mu > 0$.

	Choosing $|\chi|(1+\sigma^{-1})$ sufficiently small, depending on $\theta$, we find
	\[
		-\mu 
			< \rho(0)
			\leq \frac{1}{2} U(0)
			= \frac{1}{2} \sup_{[0,\infty)} \frac{U_x}{U}
			\leq - \mu.
	\]
	This is a contradiction, which concludes the proof.
%
%
\end{proof}

The next step is to establish a Poincar\'e type inequality. 
This is the key estimate in the proof of \Cref{l:eigenvalue}.
%

\begin{lem}\label{l:poincare}
	There exists a constant $C_K$, depending only on $K$, such that
	\begin{align}
		\int_{-a}^a \tilde v_x f^2 dx
			&\leq \frac{C_K |\chi| (1+ \sigma^2)}{\theta} \left( \int_{-a}^a u f^2 dx + \int_{-a}^a |f_x|^2 dx\right),
			\label{e:poincare1}\\
		\int_{-a}^a |\tilde v| f^2 dx
			&\leq \frac{C_K |\chi| (1+ \sigma^2)}{\theta} \left( \int_{-a}^a u f^2 dx + \int_{-a}^a |f_x|^2 dx\right)
			\label{e:poincare2}
	\end{align}
	for all $f \in H_{\rm per}^1([-a,a])$.
\end{lem}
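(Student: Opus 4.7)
The plan is to exploit the convolutional structure of $\tilde v$ through the antiderivative $\overline K_\sigma$ of $K_\sigma$, together with an elementary shift estimate for $f^2$ and a local Poincar\'e inequality that produces the $1/\theta$ factor from the lower bound $u\geq\theta$ on $[-a,0]$ (see \Cref{p:slab}). The hypothesis $(1+|x|)\overline K\in L^1$ guarantees that $\overline K_\sigma(y)=\overline K(y/\sigma)$ is continuous, even, nonnegative, and decays to $0$ at $\pm\infty$, with $L^1$-moments $\|\overline K_\sigma\|_1=\sigma\|\overline K\|_1$ and $\|\,|\cdot|\,\overline K_\sigma\|_1=\sigma^2\|\,|\cdot|\,\overline K\|_1$; these moments account for the $(1+\sigma^2)$ factor on the right-hand side. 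Since $\tilde u$ is continuous on $\mathbb R$ (the boundary values $u(-a)=1$, $u(a)=0$ match the constant extensions), its distributional derivative is $\tilde u'=u_z\mathds{1}_{(-a,a)}$, and a single integration by parts yields the equivalent representations
\[
\tilde v(x)=\chi\int_{-a}^{a}\overline K_\sigma(x-z)\,u_z(z)\,dz=-\chi\,\overline K_\sigma(x+a)+\chi\int_{-a}^{a}K_\sigma(x-z)\,u(z)\,dz.
\]
The second form splits $\tilde v$ into a boundary piece concentrated near $x=-a$ and a bulk convolution with $u$ on $[-a,a]$.

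Bounding the two pieces separately, for the boundary contribution $|\chi|\int\overline K_\sigma(x+a)f^2(x)\,dx$ the change of variable $y=x+a$ and the pointwise shift inequality $f^2(y-a)\leq 2f^2(-a)+2|y|\int f_x^2$ (from the fundamental theorem of calculus and Cauchy--Schwarz) produce a constant multiple of $\sigma f^2(-a)+\sigma^2\int f_x^2$. The boundary evaluation $f^2(-a)$ is then absorbed using the local Poincar\'e estimate $f^2(-a)\leq \frac{2}{\sigma}\int_{-a}^{-a+\sigma}f^2+\sigma\int f_x^2$ combined with $u\geq\theta$ on $[-a,0]$, giving $f^2(-a)\leq\frac{2}{\sigma\theta}\int u f^2+\sigma\int f_x^2$ and hence the target contribution $\frac{|\chi|}{\theta}\int u f^2+|\chi|\sigma^2\int f_x^2$. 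For the bulk piece, I would prove a pointwise bound of the form $(|K_\sigma|*\tilde u)(x)\leq \frac{C_K}{\theta}u(x)+C_K(\overline K_\sigma(x)+\overline K_\sigma(x+a))$ by decomposing $\int|K_\sigma(y)|\tilde u(x-y)\,dy$ according to the sign of $y$ and the location of $x\pm y$ in $[-a,a]$, and then invoking $u\geq\theta$ on $[-a,0]$ together with monotonicity of $u$ on $[0,a]$ (from \Cref{l:upper_bound_a}) to compare $\tilde u(x-y)$ to $u(x)$. Integrating this pointwise bound against $f^2$ and treating the tails with the same Fubini-plus-shift argument as in the boundary piece produces the desired estimate.

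Inequality \eqref{e:poincare1} is proved analogously using the decomposition $\tilde v_x=-\frac{\chi}{\sigma}\tilde u+\chi(K_\sigma^{\mathrm{reg}\prime}*\tilde u)$, where $K_\sigma^{\mathrm{reg}\prime}\geq 0$ is the classical derivative of $K_\sigma$ off the origin and is a positive measure of total mass $1/\sigma$: the diagonal term contributes $\frac{|\chi|}{\sigma}\int u f^2$ directly, and the convolution term is handled exactly as above. The main technical obstacle is establishing the pointwise comparison $(|K_\sigma|*\tilde u)(x)\leq \frac{C_K}{\theta}u(x)+\text{tails}$ uniformly across $x\in[-a,0]$ and $x\in[0,a]$: the left-half case is immediate from $u\geq\theta$, but for $x\in[0,a]$ the contribution from values of $y$ with $x-y<0$, where $\tilde u$ can approach $1$, must be controlled carefully using the decay of $\overline K_\sigma$ together with $u(x)\leq\theta$ on $[0,a]$, so as not to generate any $a$-dependent $\int_0^{a}f^2$ term upon integration against $f^2$.
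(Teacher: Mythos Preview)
Your approach has a genuine gap in the ``bulk'' pointwise bound
\[
(|K_\sigma|*\tilde u)(x)\leq \frac{C_K}{\theta}u(x)+C_K\bigl(\overline K_\sigma(x)+\overline K_\sigma(x+a)\bigr),
\qquad x\in[0,a],
\]
and the ingredients you list (monotonicity of $u$ on $[0,a]$ and $u\geq\theta$ on $[-a,0]$) cannot produce it.  The difficulty is not the region $x-y<0$ that you flag; it is the region $y\in(0,x)$, where $x-y\in(0,x)$ lies to the \emph{right} of zero but to the \emph{left} of $x$.  There monotonicity gives only $u(x-y)\leq\theta$, so
\[
\int_0^x |K_\sigma(y)|\,u(x-y)\,dy
\]
is controlled by $\theta$, not by $u(x)$ or by $\overline K_\sigma(x)$.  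Concretely, take the indicator kernel $K(y)=-\tfrac12\operatorname{sign}(y)\mathds{1}_{[-1,1]}(y)$, so $\overline K_\sigma(x)=0$ for $x\geq\sigma$.  For any $x\in(\sigma,a]$ the right side of your bound is $\frac{C_K}{\theta}u(x)$, while the left side contains
\[
\int_0^\sigma |K_\sigma(y)|\,u(x-y)\,dy
=\frac{1}{2\sigma}\int_{x-\sigma}^{x}u(z)\,dz
\geq \frac{u(x-\sigma)}{2},
\]
using monotonicity.  Since $u(x-\sigma)/u(x)$ is typically of order $e^{\mu\sigma}$ (and certainly not bounded by a $K$-only constant as $\sigma\to\infty$), the inequality cannot hold with $C_K$ independent of $\sigma$.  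The same obstruction appears for the treatment of $\tilde v_x$ via $K_\sigma^{\rm reg\,\prime}*\tilde u$.

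The paper's proof sidesteps the pointwise comparison entirely.  It first reduces~\eqref{e:poincare1} to~\eqref{e:poincare2} by a single integration by parts, $\int\tilde v_x f^2=f(-a)^2[\tilde v(a)-\tilde v(-a)]-2\int\tilde v f f_x$, and handles the cross term with Young's inequality.  For~\eqref{e:poincare2}, the region $[-a,0]$ is trivial (as you note), and on $[0,a]$ the paper applies your shift inequality $f^2(x)\leq 2f(0)^2+2x\int f_x^2$, reducing matters to the weighted $L^1$ estimate $\int_0^a x\,|\tilde v(x)|\,dx\leq C|\chi|\sigma^2$.  This integrated quantity \emph{is} controllable: splitting the inner $y$-integral at $y=x/2$ and invoking the exponential decay $u(x)\leq\theta e^{-\mu x}$ from \Cref{l:exponential_decay} (the ingredient missing from your sketch) handles the near part, while the moment hypothesis $(1+|x|)\overline K\in L^1$ handles the far part.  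Without exponential decay the estimate fails; monotonicity alone leaves an uncontrolled $\theta\int_0^a f^2$ term, exactly the $a$-dependent contribution you were trying to avoid.
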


We postpone the proof of this lemma momentarily and show its application in the proof of \Cref{l:eigenvalue}.
\begin{proof}[Proof of \Cref{l:eigenvalue}]
	Let $(\lambda, \phi)$ solve~\eqref{e:tw_eigenvalue}.  Then, using \Cref{l:poincare} and the form of $V$ (see~\eqref{e:tw5}), we find a universal constant $C$ such that
	\begin{equation}\label{e:tw10}
	\begin{split}
		-\int V\phi^2 dx
			&= \int u \phi^2 dx
				+ \int \e\left(1 + \frac{\e}{4}\right) \phi^2 dx
				- \int \tilde v \left( \frac{c}{2}- 
					\frac{\tilde v}{4}\right) \phi^2 dx
				- \int \frac{\tilde v_x}{2} \phi^2 dx\\
			&\geq \int u \phi^2 dx
				- C \frac{C_K |\chi| (1+ \sigma^2)}{\theta} \left(\int u \phi^2 dx
				+ \int\phi_x^2 dx\right).
	\end{split}
	\end{equation}
	Multiplying~\eqref{e:tw_eigenvalue} by $\phi$ and integrating by parts yields
	\[
		\lambda \int \phi^2 dx
			= \int \phi_x^2 dx - \int V \phi^2 dx.
	\]
	This, along with~\eqref{e:tw10}, implies that if $|\chi|$ is sufficiently small, then
	\[
		\lambda \int \phi^2 dx
			\geq \int \phi_x^2 dx + \int u \phi^2 dx - C \frac{C_K |\chi| (1+ \sigma^2)}{\theta} \left( \int u \phi^2 dx + \int \phi_x^2 dx \right)
			\geq 0.
	\]
	Hence, up to decreasing $|\chi|$, $\lambda \geq 0$, which finishes the first claim in \Cref{l:eigenvalue}.

	Before we examine the bound on $\phi(0)$, we require an upper bound on $\lambda$. To this end, we construct a test function for~\eqref{e:Rayleigh}; let
	\[
		\tilde \psi(x) =
			\begin{cases}
				A\left(x- \frac{a}{2}\right) \qquad &\text{ if } x \in \left(\frac{a}{2}, \frac{3a}{4}\right),\\
				\frac{Aa}{4} - A\left(x - \frac{3a}{4}\right) \qquad &\text{ if } x \in \left( \frac{3a}{4}, a\right)\\
				0 &\text{ otherwise},
			\end{cases}
	\]
	where $A = (96/a^3)^{1/2}$.  See \Cref{f1} for a cartoon of $\tilde \psi$.  \begin{figure}
\begin{center}
\begin{overpic}[scale=.75]
	{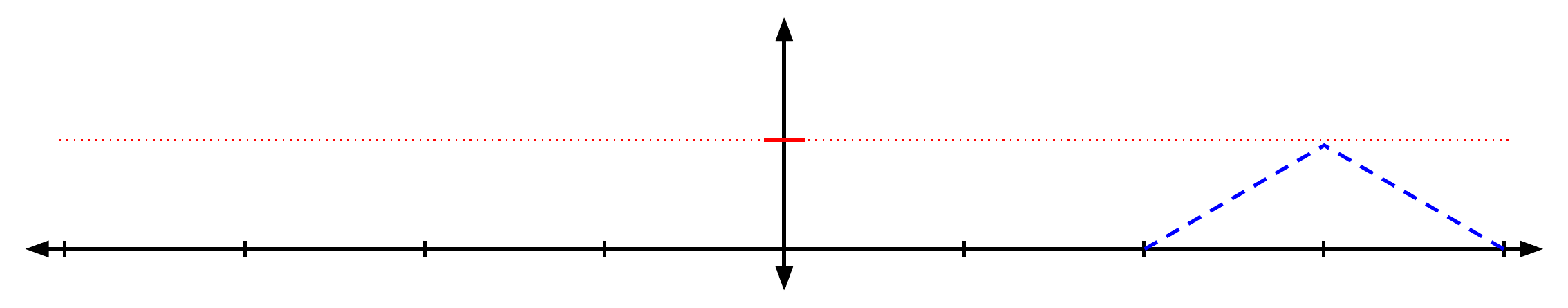}
	\put(2,.8){$-a$}
	\put(95.4, .8){$a$}
	\put(84, 13){\color{blue} $\tilde\psi$}
	\put(72.3,.8){$\frac{a}{2}$}
	\put(83.4,.8){$\frac{3a}{4}$}
	\put(50.5,12){\color{red} $\frac{Aa}{4} = \sqrt{\frac{6}{a}}$}
\end{overpic}
\caption{A cartoon of $\tilde\psi$, which is represented by the dashed line.}
\label{f1}
\end{center}
\end{figure}
	We point out that
	\[
		\int \tilde \psi^2\, dx
			= \frac{A^2 a^3}{96} = 1
			\quad \text{ and } \quad
		\int \tilde \psi_x^2\, dx
			= A^2 \frac{a}{2} = \frac{48}{a}.
	\]
	
	Using \Cref{l:exponential_decay} and~\eqref{e:v_bound}, there is $C>0$ such that  $\max_{x \in [a/2,a]} (-V)\leq C|\chi|(1+\sigma^{-1})$ for $a$ sufficiently large.  Using $\tilde \psi$ as a test function in~\eqref{e:Rayleigh} along with these three estimates, yields
	\begin{equation}\label{e:lambda_above}
		\lambda 
			\leq C |\chi|(1 + \sigma^{-1}),
	\end{equation}
	when $a$ is chosen sufficiently large.  

	We now obtain the growth bound on $\phi$.  Let
	\begin{equation}\label{e.c52101}
		\delta = 2\max_{x\in [0,a]}\left(-\lambda - V(x)\right)
			= 2\max_{x\in [0,a]}\left(-\lambda + u + \e\left(1 + \frac{\e}{4}\right) - \tilde v \left( \frac{c}{2} - \frac{\tilde v}{4}\right) - \frac{\tilde v_x}{2}\right).
	\end{equation}
	Recall, from~\eqref{e:v_bound} that $|\tilde v|, |\tilde v_x| \leq |\chi|(1 + \sigma^{-1})$.  Using these and~\eqref{e:lambda_above}  while evaluating the right hand side of~\eqref{e.c52101}, we find
	\[
		\delta \geq 2u(0) - C |\chi|\left(1 + \frac{1}{\sigma}\right)
			= 2\theta - C |\chi|\left(1 + \frac{1}{\sigma}\right).
	\]
	Hence, after decreasing $|\chi|(1 + \sigma^{-1})$, we see that $\delta >0$.  Moreover, after using the monotonicity of $u$ (\Cref{l:monotonicity}) as well as the bound on $\eps$~\eqref{e.c52102}, we obtain a matching upper bound:
	\[
		\delta
			\leq 2\left(\max_{x\in[0,a]} u\right)
				+ C |\chi|\left(1 + \frac{1}{\sigma}\right)
			= 2 u(0) + C |\chi|\left(1 + \frac{1}{\sigma}\right)
			= 2 \theta + C |\chi|\left(1 + \frac{1}{\sigma}\right).
	\]
	It follows that 
%
%
	there exists a universal constant $C$ such that
	\[
		|\delta - 2\theta| < C|\chi|\left( 1 + \frac{1}{\sigma}\right).
	\]
	For the remainder of the proof, we assume that $\theta$ and $|\chi|(1+\sigma^{-1})$ are sufficiently small so that $2\sqrt \delta < 1/2$ and $\delta > \theta/2$ (recall that $\theta < \theta_0 < 1/100$).

	We now claim that $\phi(0) \leq e^{2 \sqrt \delta a}$, which would complete the proof.  We proceed by contradiction, assuming that $\phi(0) > e^{2 \sqrt \delta a}$.  For any $A> 0$, let
	\[
		\underline \phi_A(x)
			= A\left(
				\left(e^{2\sqrt \delta a} -  \frac{e^{3 \sqrt \delta a} -1}{e^{\sqrt \delta a} - e^{-  \sqrt \delta a}}\right)e^{\sqrt{\delta} x}
				 + \left(\frac{e^{3 \sqrt \delta a} - 1}{e^{ \sqrt \delta a} - e^{-\sqrt \delta a}}\right)e^{-\sqrt \delta x}\right).
	\]
	Importantly, we have that $\underline\phi_A(a) = A$, $\underline \phi_A(0) = A e^{2\sqrt{\delta} a}$, and 
	\[
		-(\underline \phi_{A})_{xx}
			= -\delta \underline \phi_A
			< (\lambda+ V) \underline\phi_A.
	\]
	We let $A_0 = \sup\{A : \underline \phi_A < \phi$ on $[0,a]\}$.  Continuity implies that $\phi$ is positive and bounded away from 0 and, hence, $A_0$ is well-defined and positive.  By continuity, there exists $x_0 \in [0,a]$ such that $\underline \phi_{A_0}(x_0) = \phi(x_0)>0$.  There are three cases to consider.
	
	{\bf Case one: $x_0 \in (0,a)$.}  Then, $x_0$ is the location of a minimum of $\phi - \underline \phi_{A_0}$, which implies that $\phi_{xx} \geq \underline\phi_{A_0,xx}$ at $x_0$.  Thus, at $x_0$,
	\[
		- \phi_{xx}
			\leq - (\underline\phi_{A_0})_{xx}
			< (\lambda + V) \underline \phi_{A_0}
			= (\lambda+V) \phi
			= - \phi_{xx},
	\]
	which is a contradiction.  Hence, $x_0 \notin (0,a)$.
	
	{\bf Case two: $x_0 = 0$.}  Then we have that
	\[
		\phi(0)
			= \underline \phi_{A_0}(0)
			= A_0 e^{2\sqrt \delta a}.
	\]
	By assumption, $\phi(0) > e^{2 \sqrt \delta a}$.  Hence, $A_0 > 1$.  On the other hand, by construction of $A_0$, we have that $\phi(a) \geq \underline \phi_{A_0}(a)$.  This yields a contradiction because $\phi(a) = 1$ and $\underline \phi_{A_0}(a) = A_0 > 1$.
	
	{\bf Case three: $x_0 = a$.}  This implies that
	\[
		1 = \phi(a)
			= \underline\phi_{A_0}(a)
			= A_0.
	\]
	Since $\phi(a) = \underline\phi_{A_0}(a)$ and $\phi(x) \geq \underline\phi_{A_0}(x)$ for all $x\in [0,a]$, we have that
	\begin{equation}\label{e:tw21}
	\begin{split}
		\phi_x(a)
			&\leq \underline\phi_{A_0,x}(a)
			= \sqrt\delta\left(e^{2\sqrt \delta a} -  \frac{e^{3 \sqrt \delta a} -1}{e^{\sqrt \delta a} - e^{- \sqrt \delta a}}\right)e^{\sqrt{\delta} a}
				 - \sqrt\delta \left(\frac{e^{3 \sqrt \delta a} - 1}{e^{ \sqrt \delta a} - e^{-\sqrt \delta a}}\right)e^{-\sqrt \delta a}\\
			&= \sqrt\delta\frac{- 2 e^{\sqrt \delta a} + 1+ e^{- 2 \sqrt \delta a}}{1 - e^{-  2\sqrt \delta a}}
			\leq - 2\sqrt\delta e^{\sqrt \delta a} + \sqrt \delta \frac{1+ e^{- 2 \sqrt \delta a}}{1 - e^{-  2\sqrt \delta a}}
			\leq - 2 \sqrt \delta e^{\sqrt \delta a} + 1.
	\end{split}
	\end{equation}
	where the last line follows when $a$ is sufficiently large.  
	The first equality comes from the definition of $\underline\phi_{A_0}$ and that $A_0 = 1$, while the second and third follow 
	from direct computations.
	
	We now obtain a contradiction by showing that $\phi_x(a)$ cannot take such a large value.  Before beginning we make a note about the regularity of $\phi_x$.  Note that $\phi_{xx} = -(\lambda + V) \phi \in L^2_{\rm per}([-a,a])$ and, hence, $\phi_x$ is continuous and periodic on $[-a,a]$.  As we use below, one consequence of this is that $\phi_x(-a) = \phi_x(a)$.

	Assume that $a$ is sufficiently large that, via~\eqref{e:tw21}, $\phi_x(a) < 0$.  Let 
	\[
		\bar x = \min\{x \geq -a : \phi_x(x) = \phi_x(a)/2\}.
	\]
	We show that $\bar x$ is well-defined.  Since $\phi$ is periodic,
	\[
		\int_{-a}^a \phi_x\, dx = 0.
	\]
	Hence, $\phi_x$ must be nonnegative somewhere.  Since $\phi_x(a) < 0$, it follows that $\{x: \phi_x(x) = \phi_x(a)/2\}$ is nonempty, which implies that $\bar x$ is well-defined.

		We note that
		\begin{equation}\label{e.c52103}
			\phi_x(x) \leq \frac{\phi_x(a)}{2}
				\quad\text{ for all } x \in[-a,\bar x].
		\end{equation}
		We see this as follows.  Since $\phi$ is periodic and $\phi_{xx}$ is bounded, then $\phi_x$ is continuous and periodic on $[-a,a]$.  This first yields that 
		\begin{equation}\label{e.c52104}
			\phi_x(-a) = \phi_x(a)<0.
		\end{equation}
		Using~\eqref{e.c52104} along with the continuity of $\phi_x$, the definition of $\bar x$, and the intermediate value theorem, we deduce~\eqref{e.c52103}.
	
	Next, we notice that there exists $\xi \in [-a, x_0]$ such that
	\be\label{e.c6253}
		\phi_{xx}(\xi)
			= \frac{ \phi_x(\bar x) - \phi_x(-a)}{\bar x - (-a)}
			= \frac{ \frac{\phi_x(a)}{2} - \phi_x(a)}{\bar x + a}
			= \frac{|\phi_x(a)|}{2(\bar x + a)}.
	\ee
	In order to show that $\phi_{xx}(\xi)$ is large, we obtain a smallness bound on $a+x_0$ through the positivity of $\phi$.  Indeed, by \eqref{e.c52104} and the mean value theorem,
	\[
		0 \leq \phi(\bar x)
			\leq \phi(-a) + \frac{\phi_x(a)}{2}(\bar x-(-a))
			= 1 - \frac{|\phi_x(a)|}{2}(\bar x + a),
	\]
	which implies that $\bar x + a \leq 2 / |\phi_x(a)|$.  Plugging this into~\eqref{e.c6253} yields
	\be\label{e.c6254}
		\phi_{xx}(\xi)
			\geq \left(\frac{\phi_x(a)}{2}\right)^2.
	\ee
	
	In addition, by~\eqref{e:tw_eigenvalue},~\eqref{e:lambda_above}, and the fact that $\phi(x) \leq 1$ for all $x\in [-a,\bar x]$ (which follows from~\eqref{e.c52104} and~\eqref{e:tw_eigenvalue})
	\be\label{e.c6255}
		|\phi_{xx}(\xi)|
			= |(V + \lambda)\phi|
			\leq C(1 + |\chi|(1+\sigma^{-1})).
	\ee
	
	Putting together~\eqref{e:tw21},~\eqref{e.c6254}, and~\eqref{e.c6255}, we find
	\[
		\left(- \sqrt \delta e^{\sqrt\delta a} + \frac{1}{2} \right)^2
			\leq C(1 + |\chi|(1+\sigma^{-1})).
	\]
	We have reached a contradiction since the right hand side is independent of $a$ while the left hand side tends to infinity as $a\to\infty$.  This concludes the proof.

\end{proof}

Now we prove the functional inequalities that were the crux of the argument that $\lambda$ is nonnegative.
\begin{proof}[Proof of \Cref{l:poincare}]
We begin by reducing~\eqref{e:poincare1} to~\eqref{e:poincare2}.  Indeed, integrating by parts, we obtain
\be\label{e.c6243}
	\int_{-a}^a \tilde v_x f^2\, dx
		= f(-a)^2\left[\tilde v(a) - \tilde v(-a)\right]
			- 2 \int_{-a}^a \tilde v f f_x\, dx.
\ee
Using the Sobolev inequality, we see that, for some $C>0$ that changes line-by-line in the sequel but is independent of all parameters,
\[
	f(-a)^2 [\tilde v(a) - \tilde v(-a)]
		\leq C\|\tilde v\|_{L^\infty}
			\left( \int_{-a}^{-a+1} (f^2 + f_x^2)\, dx\right)
\]
and using Young's inequality, we find
\[
	-2 \int_{-a}^a \tilde v f f_x dx
		\leq \int_{-a}^a |\tilde v| f^2 dx
			+ \|\tilde v\|_{L^\infty} \int_{-a}^a f_x^2 dx.
\]
Hence,~\eqref{e.c6243} becomes
\[
	\int_{-a}^a \tilde v_x f^2\, dx
		\leq C\|\tilde v\|_{L^\infty} \int_{-a}^{-a+1} f^2\, dx
			+ \|\tilde v\|_{L^\infty} \int_{-a}^a |f_x|^2 dx
			+ \int_{-a}^a |\tilde v| f^2 dx.
\]
Recalling the bounds on $\tilde v$ in~\eqref{e:v_bound} and that $u \geq \theta$ on $[-a,0]$ due to \Cref{p:slab}, we find
\begin{equation}\label{e:tw22}
\begin{split}
	\int_{-a}^a \tilde v_x f^2\, dx
		&\leq \frac{C|\chi|}{\theta} \int_{-a}^{-a+1} u f^2\, dx
			+ C|\chi| \int_{-a}^a f_x^2 dx
			+ \int_{-a}^a |\tilde v| f^2 dx\\
		&\leq \frac{C|\chi|}{\theta} \left( \int_{-a}^{a} u f^2\, dx + \int_{-a}^a |f_x|^2\, dx\right)
			+ \int_{-a}^a |\tilde v| f^2\,  dx.
\end{split}
\end{equation}

It is now clear from~\eqref{e:tw22} that~\eqref{e:poincare1} holds if~\eqref{e:poincare2} does.  We now establish~\eqref{e:poincare2}.  First, by \Cref{p:slab}, $u(x) \geq \theta$ for all $x\in[-a,0]$ so that $|\tilde v(x)|\leq C|\chi|u(x)/\theta$ for all $x \in [-a,0]$.  Thus,
\begin{equation}\label{e:tw23}
	\int_{-a}^{0} |\tilde v| f^2\, dx
		\leq \frac{C|\chi|}{\theta} \int_{-a}^{0} u f^2\, dx.
\end{equation}

Next, we bound the integral on $[0,a]$.  Indeed,
\begin{equation}\label{e:tw24}
\begin{split}
	\int_0^a |\tilde v| f^2\, dx
		&= \int_{0}^a |\tilde v| \left( \int_{0}^x f_x dy + f(0)\right)^2 dx
		\leq 2 \int_{0}^a |\tilde v| \left( x \int_{0}^x f_x^2 dy + f(0)^2\right)\, dx.
\end{split}
\end{equation}
As above, we bound $f(0)$ using the Sobolev inequality and the fact that $u \geq \theta$ on $[-1,0]$.  Indeed:
\[\begin{split}
	|f(0)|^2
		&\leq C \int_{-1}^0 f_x^2\, dx + C\int_{-1}^0 f^2\, dx
		\leq C \int_{-1}^0 f_x^2\, dx + \frac{C}{\theta}\int_{-1}^0 uf^2\, dx\\
		&\leq C \int_{-a}^a f_x^2\, dx + \frac{C}{\theta} \int_{-a}^a u f^2\, dx.
\end{split}\]
Hence,~\eqref{e:tw24} becomes
\[
	\int_{0}^a |\tilde v| f^2\, dx
		\leq C \left(\int_{-a}^a |f_x|^2\, dx + \frac{C}{\theta}\int_{-a}^a uf^2\, dx \right) \int_{0}^a (1+x) |\tilde v|\, dx.
\]

Clearly we have that
\[
	\int_0^a (1 + x) |\tilde v|\, dx
		\leq 2 \int_0^a x |\tilde v|\, dx
			+ \int_0^1 |\tilde v|\, dx
		\leq 2 \int_0^a x |\tilde v|\, dx
			+ C|\chi|.
\]
As a result, the proof is finished if
\begin{equation}\label{e:tw25}
	\int_0^a x |\tilde v|\, dx
		\leq C|\chi| \sigma^2.
\end{equation}
We now establish this inequality.  We begin by using the exact form of $\tilde v$ along with the decay of $u$ given in \Cref{l:exponential_decay}.  Indeed,
\begin{equation}\label{e:tw26}
\begin{split}
	\frac{1}{|\chi|}\int_0^a x |\tilde v|\, dx
		&\leq \int_0^a x \int_0^\infty |K_\sigma(y)||\tilde u(x-y) - \tilde u(x+y)|\, dy dx\\
		&= \int_0^a x \left(\int_0^{x/2} + \int_{x/2}^\infty\right) |K_\sigma(y)||\tilde u(x-y) - \tilde u(x+y)|\, dy dx\\
		&\leq \theta \int_0^a x \int_0^{x/2} |K_\sigma(y)| e^{-\mu(x-y)}\, dy dx
			+ 2\int_0^a x \int_{x/2}^\infty |K_\sigma(y)|\, dy dx.
\end{split}
\end{equation}
In the last inequality, we used that $u$ is bounded above by $2$ (recall \Cref{l:upper_bound_a} and that $|\chi|/\sigma < 1/2$, by assumption).

We bound the first term on the last line of~\eqref{e:tw26}.  Re-writing the integral, we find
\begin{equation}\label{e:tw27}
\begin{split}
	\int_0^a x\int_0^{x/2} |K_\sigma(y)| e^{-\mu(x-y)}\, dy dx
		&= \int_0^{a/2} |K_\sigma(y)| \int_{2y}^a x e^{-\mu(x-y)}\, dx dy\\
		&\leq \int_0^{a/2} |K_\sigma(y)| \left(\frac{e^{-\mu y}(1 + 2\mu y)}{\mu^2}\right)\, dy
		\leq C,
\end{split}
\end{equation}
where the last inequality follows from the fact that $\|K_\sigma\|_{L^1} = \|K\|_{L^1} = 1$ (see~\eqref{e:K_normalization}).

Next we bound the second term on the last line of~\eqref{e:tw26}.  Using the fact that $|K| = - K = - \overline K'$ on $[0,\infty)$ with $(1+|x|)\overline K \in L^1$ (see~\eqref{e:K_antiderivative}), we find
\begin{equation}\label{e:tw28}
\begin{split}
	\int_0^a x &\int_{x/2}^\infty |K_\sigma(y)|\, dy dx
		= -\int_0^a x \int_{x/2}^\infty \frac{1}{\sigma} \overline K'\left(\frac{y}{\sigma}\right)\, dy dx
		= -\int_0^a x \int_{x/2}^\infty \left(\overline K\left(\frac{y}{\sigma}\right) \right)_y dy dx\\
		&= \int_0^a x \overline K\left(\frac{x}{2\sigma}\right)\, dx
		\leq C\sigma^2.
\end{split}
\end{equation}
The combination of~\eqref{e:tw26}, \eqref{e:tw27}, and~\eqref{e:tw28} establishes~\eqref{e:tw25}.    Thus, the proof is complete.
\end{proof}

\section{Fast traveling waves: \Cref{thm:fast}}

We now establish large lower bounds on the traveling wave speed $c$ whenever $\sigma$ is large, in absolute terms and relative to $-\chi$.  

%
%
%
%
%
%
%
%
%


\begin{proof}[Proof of \Cref{thm:fast}]
	Fix $\epsilon\in (0,1/2)$ and suppose that
	\begin{equation}\label{e:slow_speed}
		c < \frac{(1 - \e)|\chi|}{2}.
	\end{equation}
	Recall that
	\[
		\overline K(x) = -\int_x^\infty K(y)\, dy
			= -\int_{x\sigma}^\infty K_\sigma(y)\, dy
			\quad\text{and}\quad
		\overline K(0) = \frac{1}{2}.
	\]
	 The last equality follows from the symmetry of $K$ and~\eqref{e:K_normalization}.
Now we define the constants
	\begin{equation}\label{e:constants1}
		\theta = \frac{|\chi|}{2R}
			\quad \text{ and }\quad
		R = \frac{1}{2} \overline K^{-1}\left( \frac{1-\frac{\e}{4}}{2}\right).
	\end{equation}
	Without loss of generality, we may restrict to the case where $\eps < 1/10$ and is sufficiently small that $R\leq 1$.  Next, choose $A_\eps$ such that, if  $\sigma, \sigma/|\chi| \geq A_\epsilon$, then
	\begin{equation}\label{e:constants2}
		\frac{\theta}{\sigma} = \frac{|\chi|}{2\sigma R} < \e/4
			\qquad \text{ and } \qquad
		\e > \frac{1}{R \sigma}.
	\end{equation}
	
	The proof proceeds by showing that either the front is ``stretched'' by the advective term, which results in a large speed, or the advection is large on a large interval.  To this end, we define ``ends'' of the front
	\[\begin{split}
		&x_1 = \max\{y  \in \R : u(y) = 1 - \theta \sigma^{-1}\} \quad \text{ and}\\
		&x_2 = \max\{y \in \R : u(y) = \theta \sigma^{-1}\},
	\end{split}\]
	which are well-defined due to \Cref{l:one}.  Due to \Cref{l:monotonicity}, we see that $u$ is decreasing on
	$[x_1, \infty)$
	because, by the choice of $\theta$, $1-\theta/\sigma < (1 - \chi/2\sigma)^{-1}$ (recall that $R\leq 1$).  
	We conclude that
	\begin{equation}\label{e:u_x_1_x_2}
		u(x) \in \begin{cases}
			\left[ 1 - \frac{\theta}{\sigma}, 1\right) \qquad &\text{ if } x\leq x_1\\
			\left[\frac{\theta}{\sigma}, 1 - \frac{\theta}{\sigma}\right] \qquad &\text{ if } x \in [x_1, x_2]\\
			\left( 0, \frac{\theta}{\sigma}\right] &\text{ if } x\geq x_2.
		\end{cases}
	\end{equation}
	We now separate into two cases.

	\medskip
	
	{\bf Case one: $x_2 - x_1 > R\sigma$.}  For any $L>0$, by integrating~\eqref{e:wave} from $-L$ to $L$, we find
	\[
		c[u(-L) - u(L)]
			+ [ v(L) u(L) - v(-L) u(-L)]
			= u_x(L) - u_x(-L) + \int_{-L}^L u(1-u)\, dx.
	\]
	The first term on the left and the last term on the right are the crucial terms.  Our goal is to show that all other terms tend to zero.  First, since $u(-L) \to 1$ and $u(L) \to 0$ as $L\to\infty$, it follows that
	\[
		c[u(-L) - u(L)]
			\to c
		\quad\text{ and }\quad
		v(\pm L) = \chi(K_\sigma * u)(\pm L) \to 0
		\qquad\text{ as } L \to\infty.
	\] 
	  Using elliptic regularity theory, since $u$ tends to constants at $\pm \infty$, then $u_x(\pm L) \to 0$ as $L \to \infty$.
	Putting all of these together, we find
	\[
		\begin{split}
		c
		&= \lim_{L\to\infty} c[u(-L) - u(L)]
		\\&
		= \lim_{L\to\infty} \left(- [ v(L) u(L) - v(-L) u(-L)]
			+ u_x(L) - u_x(-L) + \int_{-L}^L u(1-u)\right)
		\\&= \lim_{L\to\infty} \int_{-L}^L u(1-u)\, dx
		= \int_{-\infty}^\infty u(1-u)\, dx.
		\end{split}
	\]
	By \Cref{l:upper_bound}, we observe that $u(1-u) \geq 0$.  Hence,
	\[
		c \geq \int_{x_1}^{x_2} u(1-u)\, dx.
	\]
	
	We now obtain a lower bound on the integral term. By~\eqref{e:u_x_1_x_2} and basic calculus, 
	$u(1-u) \geq (\theta/\sigma)(1-\theta/\sigma)$ on $[x_1,x_2]$.  Thus, we find
	\[\begin{split}
		c
			&\geq \int_{x_1}^{x_2} u(1-u)\, dx
			\geq \left( 1 - \frac{\theta}{\sigma}\right) \frac{\theta}{\sigma} |x_2 - x_1|
			\geq \left( 1 - \frac{\theta}{\sigma}\right) \theta R
			\geq \left(1 - \frac{\e}{4}\right) \frac{|\chi|}{2},
	\end{split}\]
	which contradicts~\eqref{e:slow_speed}.  Hence, case one cannot occur.

	\medskip
	
	{\bf Case two: $x_2 - x_1 \leq R\sigma$.}  In this case, we build an explicit sub-solution in order to ``push'' the wave at a fast speed.  The key step in doing so is in estimating the effect of the advective term $v$ on an interval near the front.
	
	To obtain this estimate, we obtain a lower bound on $v$ near the front for any $x \in [x_2, x_2 + R\sigma]$.
	First, we note that $u(x+y) \leq u(x-y)$ for any $x \geq x_2$ 
	and $y \geq 0$.  Indeed, if $x-y > x_2$, this follows because $u$ is decreasing on $[x_2, \infty]$.  On the other hand, if $x-y \leq x_2$ then $u(x-y) \geq \theta/\sigma$ (see~\eqref{e:u_x_1_x_2}) and $u(x+y) \leq \theta/\sigma$ since $x + y \geq x_2$.

	As a consequence of this inequality, we find
\[
	\begin{split}
		v(x)
			= \chi(K_\sigma * u)(x)
			&= \chi\int_0^\infty K_\sigma(y) [u(x-y) - u(x+y)]\, dy\\
			& \geq \chi \int_{x-x_1}^\infty K_\sigma(y) [u(x-y) - u(x+y)]\, dy
				\quad\text{ for } x\geq x_2.
	\end{split}
\]
Above we used that $\chi K_\sigma \geq 0$ on the domain of integration.  
Then, using~\eqref{e:u_x_1_x_2} yields
\[
	v(x)
			\geq \chi \int_{x-x_1}^\infty K_\sigma(y) \left(1 - \frac{2\theta}{\sigma}\right)\, dy.
\]
Finally, observe that $x-x_1 = (x-x_2) + (x_2 - x_1) \leq 2R \sigma$ and, thus,
\begin{equation}
	\begin{split}
		v(x)
			&\geq \chi \int_{2R\sigma}^\infty K_\sigma(y) \left(1 - \frac{2\theta}{\sigma}\right)\, dy
			= |\chi| \left(1 - \frac{2\theta}{\sigma}\right) \overline K(2R).
	\end{split}
\end{equation}

We now use the choice of constants $\theta$ and $R$,~\eqref{e:constants1} and~\eqref{e:constants2}.  We conclude that, for all $x\in [x_2, x_2 + R\sigma]$,
\begin{equation}\label{e:advection_lower_bound}
	v(x)
		\geq \frac{|\chi|}{2} \left(1 - \frac{\e}{2}\right)^2
		\geq c.
\end{equation}
	The second inequality above follows from~\eqref{e:slow_speed}.

	Using~\eqref{e:advection_lower_bound}, we define our sub-solution.  For any $A>0$, let
	\begin{equation}\label{e:underline_u}
		\underline u_A(x) = A \sin\left(\frac{\pi(x - x_2)}{R\sigma}\right).
	\end{equation}
	Notice that $\underline u_A(x_2) = \underline u_A(x_2 + R\sigma) = 0$.
	
	By \Cref{def:wave}, $u>0$.  Then, by the compactness of $[x_2, x_2 + R\sigma]$ and the continuity of $u$ (due to elliptic regularity), $\inf_{[x_2,x_2+R\sigma]} u>0$.  Hence, $\underline u_A < u$ on $[x_2, x_2+R\sigma]$ if $A$ is sufficiently small.  We then ``raise'' $\underline u_A$ up until it touches $u$; i.e., let
	\[
		A_0 = \sup\{A : \underline u_A(x) < u(x) \text{ for } x \in [x_2, x_2 + R\sigma]\}.
	\]
	Thus, there exists $x_0 \in (x_2, x_2 + R\sigma)$ such that $\underline u_{A_0}(x_0) = u(x_0)$.  Define
	\[
		\phi(x) = u(x) - \underline u_{A_0}(x).
	\]
	Note that $\phi \geq 0$ on $[x_2, x_2 + R\sigma]$, $\phi(x_2) , \phi(x_2 + L \sigma) > 0$, and that $x_0$ is the location of a zero minimum of $\phi$, which yields
	\begin{equation}\label{e:phi}
		\phi(x_0) = 0,
		\quad \phi_x(x_0)  = 0,
			\quad \text{ and }\quad
		\phi_{xx} (x_0) \geq 0.
	\end{equation}
	Also, notice that, since $x_0 > x_2$, then $u(x_0) < \theta/\sigma < (1 - \chi/\sigma)^{-1}$, which yields $\underline u_{A_0,x}(x_0) = u_x(x_0) < 0$.
	
	We now derive a differential inequality for $\phi$.  Indeed, using~\eqref{e:wave},~\eqref{e:phi}, and that $\phi(x_0) = 0$, we find, at $x_0$,
	\begin{equation}\label{e:c12}
	\begin{split}
		0 \geq (v - c) \phi_x - &\phi_{xx} - (1 - v_x)\phi
			= - u^2 - (v-c) \underline u_{A_0, x} + \underline u_{A_0,xx} + \underline u_{A_0}(1 - v_x).
%
%
%
	\end{split}
	\end{equation}
	By~\eqref{e:advection_lower_bound}, $v- c \geq 0$ at $x_0$.  In addition, by \Cref{l:monotonicity} and the choice of $x_2$, $\underline u_{A_0,x}(x_0) = u_x(x_0) < 0$.  Thus,~\eqref{e:c12} becomes, at $x_0$,
	\begin{equation}\label{e:c13}
		0 \geq - u^2 + \underline u_{A_0,xx} + \underline u_{A_0}(1 - v_x).
	\end{equation}
	Since $x_0 > x_2$, $u(x_0) < \theta/ \sigma$.  Also, recall that $u(x_0) = \underline u_{A_0}(x_0)$ and, by~\eqref{e:v_x},
	\[
		v_x(x_0)
			= \frac{|\chi|}{\sigma}u(x_0) - |\chi| \int_0^\infty K_{\sigma, x}(y) (u(x_0 - y) + 
			u(x_0 + y)
			)\, dy.
	\]
	Hence, $-u(x_0)^2 > - \underline u_{A_0}(x_0)\theta/\sigma$ and $-v_x(x_0) > -|\chi|\theta/\sigma^2$.
	Using these two inequalities along with~\eqref{e:c13} yields, at $x_0$,
	\begin{equation}\label{e:c14}
		0 > \underline u_{A_0,xx} + \underline u_{A_0}\left(1 - \frac{\theta}{\sigma} - \frac{\theta |\chi|}{\sigma^2}\right).
	\end{equation}
	Using the explicit form of $\underline u_{A_0}$ above, $\underline u_{A_0,xx} = - (\pi/R\sigma)^2 \underline u_{A_0}$; hence,
	\[
		0 > \underline u_{A_0}(x_0) \left( 1 - \frac{\theta}{\sigma} - \frac{|\chi|\theta}{\sigma^2} - \frac{\pi^2}{R^2\sigma^2} \right).
	\]
	Recalling~\eqref{e:constants1} and~\eqref{e:constants2}, we obtain
		\[
		0 > \underline u_{A_0}(x_0) \left( 1 - \frac{\theta}{\sigma} - \frac{|\chi|\theta}{\sigma^2} - \frac{\pi^2}{R^2\sigma^2} \right)
			\geq \underline u_{A_0}(x_0) \left(1 - \frac{\eps}{4} - \frac{\eps^2 R}{8} - \pi^2\e^2\right) \geq 0,
	\]
	which is a contradiction (recall that $\eps<1/10$ and $R\leq 1$).  This completes the proof.
\end{proof}

\bibliographystyle{abbrv}
\bibliography{negative_chemo}
\end{document}